\newtheorem{remark}[theorem]{Remark}
\def\cal{\mathcal}
\newcommand\ddelta\bigtriangledown
\newcommand\ld\lambda
\newcommand\Ld\Lambda
\def \bP {\Bbb P}
\def \bZ {\Bbb Z}
\title{Superconvergence of Discontinuous Galerkin method for linear
hyperbolic equations\thanks{The second author was supported in part
by the US National Science Foundation through grant DMS-1115530. The
third author was supported in part by the National Natural Science
Foundation of China under the grant 11171359.}}
\author{Waixiang Cao \footnotemark[2]\ \footnotemark[3]
\and Zhimin Zhang \footnotemark[2]\ \footnotemark[4] \and Qingsong
Zou \footnotemark[3]}
\begin{document}
\maketitle
\renewcommand{\thefootnote}{\fnsymbol{footnote}}
\footnotetext[2] {Beijing Computational Science Research Center,
Beijing, 100084, China.}
 \footnotetext[3]{ College of Mathematics and Computational Science and Guangdong Province Key Laboratory of Computational Science, Sun Yat-sen
          University, Guangzhou, 510275, China. }
          \footnotetext[4]{ Department of Mathematics, Wayne State
University, Detroit, MI 48202, USA.}


\begin{abstract}
    In this paper, we study superconvergence properties of the
    discontinuous Galerkin (DG) method for one-dimensional linear
    hyperbolic equation when upwind fluxes are used.
    We prove, for any polynomial degree $k$,
    the $2k+1$th (or $2k+1/2$th) superconvergence rate of the DG approximation at the
   downwind points  and for the  domain average under
   quasi-uniform meshes and some suitable initial discretization.
   Moreover, we prove that the derivative approximation of  the DG solution is
    superconvergent  with a rate $k+1$ at all interior left Radau points.
    All theoretical finding are confirmed by numerical experiments.
\end{abstract}

\begin{keywords}
   Discontinuous Galerkin method, superconvergence, hyperbolic, Radau
points, cell average, initial discretization
\end{keywords}

\begin{AMS}
 65M15, 65M60, 65N30
\end{AMS}

\section{ Introduction}

  Discontinuous Galerkin (DG) methods, originally developed for
  neutron transport problems \cite{Reed;Hill:1973}, are a class of finite element methods
  using completely discontinuous piecewise polynomial space. Due to its
  flexibility for arbitrarily unstructured meshes, the efficiency in parallel implementation,
  the ability to easily handle complex geometries or interfaces and  accommodate arbitrary $h$-$p$
  adaptivity, the DG method gains more popularity in solving various
  differential equations and attracts intensive theoretical
  studies. We refer to
  \cite{Cockburn;Hou;Shu:1990,Cockburn;Lin;Shu:1989,Cockburn;Shu:1989,Cockburn;Shu:1991,Cockburn;Shu:JCP1998,Cockburn;Shu:SIAM1998}
   and the references cited therein for
  an incomplete list of references.

 In the past several decades, there also has been considerable
 interest in studying superconvergence properties of DG methods.
  We refer to
\cite{Adjerid;Devine2002,Adjerid;Massey2006,Xie;Zhang2012,Zhang;xie;zhang2009}
for ordinary differential equations, and
  \cite{Adjerid;Weinhart2009,Adjerid;Weinhart2011}
  for multidimensional first order hyperbolic systems, and \cite{Chen;Shu:JCP2008,Chen;Shu:SIAM2010}
  for one-dimensional hyperbolic conservation laws and
  time-dependent convection-diffusion equations.
  Very recently, Yang and Shu in \cite{Yang;Shu:SIAM2012}  studied
   superconvergence properties of a DG method for linear hyperbolic equations
   when upwind fluxes were used. They proved a
  $k+2$th superconvergence rate of the DG approximation at the right Radau points
   and for the cell average
  under suitable initial discretization. They also
  presented  numerically, for $k=1,2$, a $2k+1$th superconvergence rate of the DG solution at the
  downwind points  and for the cell average. However,
   a theoretical proof of this remarkable property remains open.
  Indeed, the $2k+1$th superconvergence rate is one of the unsolved mysteries of the DG method  for hyperbolic
  equations.

   The main purpose of our current work is to uncover this mystery by offering a rigorous mathematical proof
  for the $2k+1$th (or $2k+1/2$th) superconvergence rate at downwind points and for the domain average.
  As by-products, we provide a simplified proof for  the point-wise $k+2$th superconvergence rate at the right Radau points,
  a fact established in \cite{Yang;Shu:SIAM2012} in a weaker sense (under a discrete $L^2$-norm) by a different approach;
  we also prove  a point-wise $k+1$th derivative superconvergence rate at the left Radau points, a fact not established before.
  By doing so, we present a full picture for superconvergence properties of the DG method for liner hyperbolic equations
  in one spacial dimension.

     To prove the $2k+1$th superconvergence rate, we revisit the problem
     considered in \cite{Yang;Shu:SIAM2012} and make the same assumption that the time integration is exact.
     The novelty lies in that we adopt a completely different analysis track.
    An essential ingredient is the design of a correction function $w$. The idea is
    motivated from its successful applications to finite element methods (FEM) and finite volume
   methods (FVM) for elliptic equations (see, e.g. \cite{Cao;zhang;zou:2kFVM,Chen.C.M2012}).
   However, as the correction function is very different from FVM to FEM,
   it is much more so for the DG method due to special features of hyperbolic equations from those of elliptic equations,
   especially, the time dependent feature.
   Our approach here is to correct the error between the exact solution $u$ and its truncated Radau expansion
     $P_h^-u$ (defined in Section 3), which interpolates $u$ at all downwind points. With help of the correction function $w$,
     which is zero at all downwind points, we prove that the DG solution is superclose (with order $2k+1$) to $P_h^-u-w$.
     It is this supercloseness that leads to the $2k+1$th superconvergence rate at the downwind points (in average sense)
     and for the domain average.
     As a direct consequence, we obtain another new theoretical result : the derivative approximation of the DG solution is
    superconvergent at all interior left Radau points with a rate $k+1$.
    To end this introduction, we would like to point out that all superconvergent results here are valid
    for one-dimensional linear systems, and the proof is along the same line without any difficulty.

    The rest of the paper is organized as follows. In Section 2, we present DG schemes
    for linear conservation laws.
    Section 3 is the most technical part, where we
    construct a special interpolation function superclose to the DG
    solution.
   Section 4 is the main body of the paper, where superconvergence
   results are proved with suitable initial discretization.
    Finally, we provide some
   numerical examples to support our theoretical findings in Section 5.

   Throughout this paper,  we adopt standard notations for Sobolev spaces such as $W^{m,p}(D)$ on sub-domain $D\subset\Omega$ equipped with
    the norm $\|\cdot\|_{m,p,D}$ and semi-norm $|\cdot|_{m,p,D}$. When $D=\Omega$, we omit the index $D$; and if $p=2$, we set
   $W^{m,p}(D)=H^m(D)$,
   $\|\cdot\|_{m,p,D}=\|\cdot\|_{m,D}$, and $|\cdot|_{m,p,D}=|\cdot|_{m,D}$. Notation``$A\lesssim B$" implies that $A$ can be
  bounded by $B$ multiplied by a constant independent of the mesh size $h$.
  ``$A\sim B$" stands for $``A\lesssim B"$ and $``B\lesssim A"$.

\section{ DG schemes}
  We consider the discontinuous Galerkin method for the following one-dimensional linear hyperbolic conservation
 laws
\begin{eqnarray}\label{con_laws}
\begin{aligned}
   &u_t+u_x=0,\ \ &&(x,t)\in [0,2\pi]\times(0,T], \\
   &u(x,0)=u_0(x),\ \  &&x\in R,
\end{aligned}
\end{eqnarray}
  where $u_0$ is sufficiently smooth. We will consider both the periodic boundary
  condition $u(0,t)=u(2\pi,t)$ and the Dirichlet boundary
   condition $u(0,t)=g(t)$.

  Let $\Omega=[0,2\pi]$ and $0=x_{\frac 12}<x_{\frac
  32}<\ldots<x_{N+\frac 12}$ be $N+1$ distinct points on the
  interval $\bar{\Omega}$. For all positive integers $r$, we define
  $\bZ_{r}=\{1,\ldots,r\}$ and denote by
\[
    \tau_j=(x_{j-\frac 12},x_{j+\frac 12}),\ \ x_j=\frac 12(x_{j-\frac 12}+x_{j+\frac
    12}),\ j\in\bZ_N
\]
  the cells and cell centers, respectively.
  Let $h_j=x_{j+\frac 12}-x_{j-\frac 12}$,  $\bar{h}_j = h_j/2$
  and $h =  \displaystyle\max_j\; h_j$. We assume  that the mesh
  is quasi-uniform, i.e., there exists a constant $c$ such that
\[
    h\le c h_j, j\in\bZ_N.
\]

  Define
\[
    V_h=\{ v: \; v|_{\tau_j}\in P_k(\tau_j),\; j\in\bZ_N\}
^{}\]
  to be the finite element space, where $P_k$ denotes the space of
  polynomials of degree at most $k$  with coefficients as functions of $t$.
  The DG scheme for \eqref{con_laws} reads as: Find $u_h\in V_h$
  such that for any $v\in  V_h$
\begin{equation}\label{DG_scheme1}
   (u_{ht},v)_j-(u_h,v_x)_j+u_h^-v^-|_{j+\frac
    12}-u_h^-v^+|_{j-\frac12}=0,
\end{equation}
  or equivalently,
\[
    (u_{ht}+u_{hx},v)_j+[u_h]_{j-\frac
    12}v^+_{j-\frac12}=0.
\]
   Here  $(u_{ht},v)_j=\int_{\tau_j}u_{ht}v dx$,
   $v^-_{j+\frac 12}$ and $v^+_{j+\frac12}$  denote the left
  and right limits of $v$ at the point $x_{j+\frac
    12}$, respectively, and $[v]_{j-\frac 12}=v^+_{j-\frac12}-v^-_{j-\frac 12}$
    denotes the jump of $v$ across $x_{j-\frac 12}$.

  Define
\[
   H_h^1=\{v: \; v|_{\tau_j}\in H^1(\tau_j),\; j\in\bZ_N\}
\]
  and for all $w,v\in H_h^1$,  let the bilinear form
\[
    a(w,v)=\sum_{j=1}^{N}a_j(w,v),
\]
  where
\[
  a_j(w,v)=(w_t,v)_j-(w,v_x)_j+w^{-}v^-|_{j+\frac 12}-w^{-}v^+|_{j-\frac
  12}.
\]
 With this notation, the DG scheme \eqref{DG_scheme1} can be rewritten as
\[
a(u_h, v)=0, \quad v\in V_h.
\]
Obviously, the exact solution $u$  also satisfies
\[
   a(u,v)=0, \quad v\in V_h.
\]
Moreover, if $v\in H_h^1$ satisfies $v^-_{\frac 12}=0$ or
$v^-_{\frac
    12}=v^-_{N+\frac 12}$, then
\begin{eqnarray*}
 a(v,v)&=&(v_t,v)+\frac 12\sum_{j=1}^{N}\left(v^-_{j+\frac 12}v^-_{j+\frac 12}+v^+_{j-\frac 12}v^+_{j-\frac 12}-2v^-_{j-\frac 12}v^+_{j-\frac
 12}\right)\\
 &=&(v_t,v)+\frac 12\sum_{j=1}^{N}[v]^2_{j-\frac 12}+\frac 12 (v^-_{N+\frac 12}v^-_{N+\frac 12}-v^-_{\frac 12}v^-_{\frac 12})\\
 &\ge&(v_t,v),
\end{eqnarray*}
 which means that in both two cases,
\begin{equation}\label{ineq1}
\frac 12\frac{d}{dt}\|v\|_0^2=(v_t,v)\le a(v,v).
\end{equation}


\section{ Construction of a special interpolation function}
\setcounter{equation}{0}

  One of the superconvergence analysis methods in FEM is through estimating
\[
     a(u-u_I,v),\ \ \forall v\in V_h,
\]
  where $u_I\in V_h$ is  a specially designed interpolation function, which
  is superclose to $u_h$ such that
\[
    a(u-u_I,v)=a(u_h-u_I,v),\ \ \forall v\in V_h
\]
  is of high order.  Our analysis here is also along this
  line.

  We begin the construction of $u_I$ with the Gauss-Radau projection
$P_h^-u\in V_h$ of $u$ defined by
\begin{equation}\label{gr}
(P^-_hu,v)_j=(u,v)_j,\forall
v\in\bP^{k-1}(\tau_j)\quad\text{and}\quad
P^-_hu(x_{j+\frac12}^-)=u(x_{j+\frac12}^-).
\end{equation}
   Notice that this special projection is used in the error estimates
of the DG methods, e.g. in
\cite{Chen;Shu:SIAM2010,Yang;Shu:SIAM2012}.  Since  in each element
$\tau_j, j\in\bZ_N$,   $u(x,t)$ has the
  following  Radau expansion
\begin{equation}\label{expansion}
   u(x,t)=u(x^-_{j+\frac 12},t)+\sum_{m=1}^\infty
   u_{j,m}(t)(L_{j,m}-L_{j,m-1})(x),
\end{equation}
where $L_{j,m}, j\in \bZ_N, m\ge 1$ is the classic Legendre
polynomial of degree $m$ in the interval $\tau_j$  and the
coefficient
\begin{equation}\label{coefficient}
u_{j,m}(t)=u(x^-_{j+\frac 12},t)-\frac{1}{h_j}\int_{\tau_j}
u(x,t)\sum_{l=0}^{m-1}(2l+1)L_{j,l}(x)dx,
\end{equation}
the projection $P^-_h u$ has the representation
\begin{equation}\label{eq3}
   (P^-_hu)(x,t)=u(x^-_{j+\frac 12},t)+\sum_{m=1}^k
   u_{j,m}(t)(L_{j,m}-L_{j,m-1})(x).
\end{equation}

A direct calculation yields that  for all $v\in
    V_h$
\begin{equation}\label{equa1}
    a_j(u-P_h^-u,v)=(u_t-P_h^-u_t,v)_j=-u'_{j,k+1}(t)(L_{j,k},v)_{j},
\end{equation}
   where $u_{j,k+1}$ is the same as in \eqref{expansion}.   Since $u_{j,k+1}$ is only of order $h^{k+1}$,
   $a_j(u-P_h^-u,v)$ is also of  ${\cal O}(h^{k+1})$.
In the following, we will find a suitable function $w^l\in V_h$ such
that $a_j(u-P_h^-u+w^l,v)$ has higher order   ${\cal O}(h^{k+l+1})$
for $1\le l\le k$.

For all $v\in H_h^1$, let its average primal function in $\tau_j$ be
\begin{equation}\label{eq7}
     D^{-1}_sv(x)=\frac{1}{\bar{h}_j}\int_{x_{j-\frac 12}}^xv(x') dx'=\int_{-1}^s\hat{v}(s')ds', \quad x\in \tau_j.
\end{equation}
  where
\[
    s=(x-x_j)/\bar{h}_j\in[-1,1],\ \ \hat{v}(s)=v(x).
\]
In each element $\tau_j,j\in\bZ_N, i\ge 0$,  we define
\begin{equation}\label{correcti}
   F_1=P_h^-D^{-1}_sL_{j,k},\ \  F_i=-P_h^-D^{-1}_sF_{i-1}=-(-P_h^-D^{-1}_s)^{i}
   L_{j,k},\ i\ge 2.
 \end{equation}
\begin{lemma}For all $1\le i\le k$, $F_i$ has the representation
\begin{equation}\label{fir}
 F_i(x)=\sum_{m=k-i+1}^kb_{i,m}(L_{j,m}-L_{j,m-1})(x), x\in\tau_j,
\end{equation}
  where the coefficients $b_{i,p}$ are independent of the mesh size $h_j$. Consequently,
\begin{eqnarray}\label{fp1}
& F_i(x_{j+\frac12}^-)=0, \ \ \ \|F_i\|_{0,\infty,\tau_j}\lesssim 1,\\
\label{fp2}
 & (F_i,v)_j=0,\quad \forall v\in \bP_{k-i-1}(\tau_j). 
\end{eqnarray}
\end{lemma}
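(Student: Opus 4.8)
The plan is to push the entire construction onto the reference interval $[-1,1]$ and exploit scale invariance. Under the affine map $s=(x-x_j)/\bar{h}_j$ one has $L_{j,m}(x)=L_m(s)$ with $L_m$ the standard Legendre polynomial; the operator $D^{-1}_s$ in \eqref{eq7} depends only on the reference representation $\hat v$, and the Gauss--Radau projection $P_h^-$ of \eqref{gr} commutes with the affine change of variables (the Jacobian $\bar h_j$ cancels on both sides of its defining identity, and $\bP^{k-1}$ maps to $\bP^{k-1}$). Hence the recursion \eqref{correcti} is scale invariant and the resulting coefficients are automatically independent of $h_j$, which settles that part of the claim once \eqref{fir} is established. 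Throughout I abbreviate $R_m=L_{j,m}-L_{j,m-1}$, so that \eqref{fir} is exactly the assertion $F_i\in\mathrm{span}\{R_{k-i+1},\dots,R_k\}$.

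Two elementary facts about $P_h^-$ drive everything. First, $P_h^-$ is the identity on $\bP_k(\tau_j)$, since the $k+1$ conditions in \eqref{gr} determine an element of $\bP_k$ uniquely and any $p\in\bP_k$ satisfies them. Second, and crucially, $P_h^- R_{k+1}=0$: the polynomial $R_{k+1}=L_{j,k+1}-L_{j,k}$ is orthogonal to $\bP_{k-1}(\tau_j)$ and vanishes at $x_{j+\frac12}^-$ (because $L_m(1)=1$ for every $m$), so the zero function meets both conditions in \eqref{gr} and uniqueness forces $P_h^- R_{k+1}=0$. It is this identity that prevents the degree from ever climbing above $k$.

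Next I would record the action of $D^{-1}_s$ on the Radau basis, using the classical identity $\int_{-1}^s L_m=\tfrac{1}{2m+1}(L_{m+1}-L_{m-1})$ (valid for $m\ge1$). A short computation gives $D^{-1}_s R_m=\tfrac{1}{2m+1}(R_{m+1}+R_m)-\tfrac{1}{2m-1}(R_m+R_{m-1})$ for $m\ge2$, together with $D^{-1}_s L_{j,k}=\tfrac{1}{2k+1}(R_{k+1}+R_k)$. The base case is then $F_1=P_h^- D^{-1}_s L_{j,k}=\tfrac{1}{2k+1}R_k$, and the inductive step applies $-P_h^- D^{-1}_s$ term by term to $F_{i-1}=\sum_{m=k-i+2}^k b_{i-1,m}R_m$, using $P_h^- R_m=R_m$ for $m\le k$ and $P_h^- R_{k+1}=0$. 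The hard part is the index bookkeeping, which is where the proof really lives: the top term $R_k$ generates an $R_{k+1}$ contribution that $P_h^-$ annihilates, so the maximal index stays at $k$; the bottom term $R_{k-i+2}$ generates $R_{k-i+1}$, lowering the minimal index by exactly one; and no index below $k-i+1$ can appear because $D^{-1}_s R_m$ reaches only down to $R_{m-1}$. I would stress the one genuine pitfall: the $m=1$ case of the integration formula is different (it produces a stray constant $L_{j,0}$ that would wreck \eqref{fir}), but it never occurs for $1\le i\le k$, since the lowest index present in $F_{i-1}$ is $k-i+2\ge2$. This gives \eqref{fir} with $h_j$-independent coefficients.

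Finally the three consequences are read off \eqref{fir}. Since $R_m(x_{j+\frac12}^-)=L_m(1)-L_{m-1}(1)=0$, summation gives $F_i(x_{j+\frac12}^-)=0$. Since $\|R_m\|_{0,\infty,\tau_j}=\|L_m-L_{m-1}\|_{0,\infty,[-1,1]}\le 2$ and the $b_{i,m}$ do not depend on $h_j$, we get $\|F_i\|_{0,\infty,\tau_j}\le\sum_m|b_{i,m}|\,\|R_m\|_{0,\infty,\tau_j}\lesssim 1$, proving \eqref{fp1}. For the orthogonality \eqref{fp2}, the lowest-degree Legendre polynomial occurring in \eqref{fir} is $L_{j,k-i}$ (from the $-L_{j,m-1}$ term at $m=k-i+1$), so every Legendre mode in $F_i$ has degree $\ge k-i$ and is therefore $L^2(\tau_j)$-orthogonal to $\bP_{k-i-1}(\tau_j)$.
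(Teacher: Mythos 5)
Your proof is correct and follows essentially the same route as the paper's: an induction on $i$ driven by the identity $\int_{-1}^s L_m = \tfrac{1}{2m+1}(L_{m+1}-L_{m-1})$ together with the action of $P_h^-$ on Legendre modes, which caps the top degree at $k$ --- your key fact $P_h^-(L_{j,k+1}-L_{j,k})=0$, proved via uniqueness of the projection, is just a repackaging of the paper's $P_h^-L_{j,k+1}=L_{j,k}$. Your explicit handling of the $m=1$ pitfall and the scale-invariance argument for the $h_j$-independence of the coefficients are welcome refinements of detail, but the structure of the induction and the way \eqref{fp1} and \eqref{fp2} are read off from \eqref{fir} coincide with the paper's proof.
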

\begin{proof}
We will show \eqref{fir} by induction. First, a straightforward
calculation yields
\[
  D_s^{-1}L_{j,k}=\frac{1}{2k+1}(L_{j,k+1}-L_{j,k-1}),
\]
and thus
\begin{equation*}\label{F1z}
   F_1=\frac{1}{2k+1}(L_{j,k}-L_{j,k-1}),
\end{equation*}
which implies \eqref{fir} is valid for $i=1$ with
$b_{1,k}=\frac{1}{2k+1}.$

 Now we suppose $F_i, i\le k-1$ has the representation \eqref{fir}. Since for all $m\ge 1$,
 \[
 D^{-1}_sL_{j,m}=\frac{1}{2m+1}(L_{j,m+1}-L_{j,m-1})
 \]
 and
 \[
 P_h^-L_{j,k+1}=L_{j,k},  \quad P_h^-L_{j,m}=L_{j,m}, \quad m\le k,
 \]
it is easy to deduce that
\[
   F_{i+1}=-P_h^{-}D_s^{-1}F_i=\sum_{m=k-i}^{k}b_{i+1,m}(L_{j,m}-L_{j,m-1}),
\]
 with
\begin{eqnarray*}
    &&b_{i+1, k-i}=\frac{b_{i,k-i+1}}{2(k-i)+1},\\
    &&b_{i+1, k-i+1}=\frac{b_{i,k-i+2}}{2(k-i)+3}-\frac{b_{i,k-i+1}}{2(k-i)+3}+\frac{b_{i,k-i+1}}{2(k-i)+1},\\
    &&b_{i+1, k}=-\frac{b_{i,k-1}}{2k-1}-\frac{b_{i,k}}{2k+1}+\frac{b_{i,k}}{2k-1},
\end{eqnarray*}
and for all $m=k-i+2,\ldots,k-1$,
\[
    b_{i+1,m}=\frac{b_{i,m+1}-b_{i,m}}{2m+1}+\frac{b_{i,m}-b_{i,m-1}}{2m-1}.
\]
Therefore \eqref{fir} is valid for $i+1$. Consequently, \eqref{fir}
is valid for all $1\le i\le k$.

Since $L_{j,m}(x_{j+\frac12}^-)=1$ for all $m\ge 1$, the first
formula of \eqref{fp1} holds. Moreover, by the iterative relations
between the coefficients of $b_{i,m}, 1\le i\le k, k-i+1\le m\le k$,
we have $|b_{i,m}|\lesssim 1$,  the second of \eqref{fp1} follows
from the fact that $\|L_{j,m}\|_{0,\infty,\tau_j}=1$. Finally, by
the orthogonality of the Legendre polynomials, the  formula
\eqref{fp2} is valid.
\end{proof}

 We are now ready to construct our correction function
for all $1\le l\le k$.  We define, at the boundary point $x=x_{\frac
12}=0$,
\[
    w^l(x_{\frac 12}^{-},t)=0,\ \ \forall t\ge 0;
\]
  and in each element $\tau_j, j\in\bZ_N$,
\begin{equation}\label{correct}
    w^l(x,t)=\sum_{i=1}^l w_i(x,t),\ \ w_i(x,t)=\bar{h}^i_jG_i(t)F_i(x)
\end{equation}
  with
\begin{equation}\label{correct1}
 G_i(t)=u^{(i)}_{j,k+1}(t).
\end{equation}
By  the first formula in \eqref{fp1}, $F_i(x^-_{j+\frac 12})=0$,
then $w_i(x^-_{j+\frac 12},t)=0,$ for all $i\ge 0$. Then for all
$1\le l\le k$,
\begin{equation}\label{corr1}
   w^l(x^-_{j+\frac 12},t)=0, \ \ \forall t,   j\in\bZ_N.
\end{equation}

In the following, we define the special interpolation function
\begin{equation}\label{interpolation}
  u^l_I=P_h^-u-w^l
\end{equation}
and discuss the properties of $a_j(u- u^l_I,v)$.

 \begin{theorem}\label{theorem1}
 Let $u_I^l\in V_h$ be defined by \eqref{interpolation}, \eqref{correct},\eqref{correct1} and \eqref{correcti} with some $1\le l\le k$. Then
   if $u\in W^{k+l+2,\infty}(\Omega),k\ge 1$, we have
\begin{equation}\label{corr2}
    |a_j(u-u_I^l,v)|\lesssim h^{k+l+1}\|u\|_{k+l+2,\infty,\tau_j}\|v\|_{0,1,\tau_j},\ \
    \forall v\in V_h.
\end{equation}
\end{theorem}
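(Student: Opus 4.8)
The plan is to split the bilinear form as $a_j(u-u_I^l,v)=a_j(u-P_h^-u,v)+a_j(w^l,v)$ and to show that the two contributions telescope, leaving only a single high-order remainder. For the first term I would invoke \eqref{equa1}, which already gives $a_j(u-P_h^-u,v)=-u'_{j,k+1}(t)(L_{j,k},v)_j=-G_1(t)(L_{j,k},v)_j$. For $a_j(w^l,v)=\sum_{i=1}^l a_j(w_i,v)$ the first observation is that every boundary contribution drops out: the term $w_i^-v^-|_{j+\frac12}$ vanishes because $F_i(x_{j+\frac12}^-)=0$ by \eqref{fp1}, while the term $w_i^-v^+|_{j-\frac12}$ uses the value of $w_i$ restricted to $\tau_{j-1}$ at its own downwind node $x_{j-\frac12}$, which is again $0$ (and the boundary prescription $w^l(x_{\frac12}^-,t)=0$ handles $j=1$). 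Hence $a_j(w_i,v)=(\partial_t w_i,v)_j-(w_i,v_x)_j$.

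The heart of the argument is a telescoping identity extracted from the recursive definition \eqref{correcti}. Since $w_i=\bar h_j^i G_i(t)F_i(x)$ with $G_i=u_{j,k+1}^{(i)}$, we have $G_i'=G_{i+1}$ and therefore $\partial_t w_i=\bar h_j^iG_{i+1}(t)F_i(x)$, so that $(\partial_t w_i,v)_j=\bar h_j^iG_{i+1}(F_i,v)_j$. To evaluate $(F_i,v_x)_j$ I would use that $v_x\in\bP_{k-1}(\tau_j)$, so the Gauss--Radau property \eqref{gr} gives $(P_h^-g,v_x)_j=(g,v_x)_j$; combined with $F_i=-P_h^-D_s^{-1}F_{i-1}$ this reduces $(F_i,v_x)_j$ to $-(D_s^{-1}F_{i-1},v_x)_j$, which I integrate by parts. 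Both endpoint terms vanish: at $x_{j-\frac12}$ the primitive $D_s^{-1}F_{i-1}$ is zero by \eqref{eq7}, and at $x_{j+\frac12}$ it equals $\bar h_j^{-1}\int_{\tau_j}F_{i-1}=0$ by the zero-mean property \eqref{fp2} (valid as long as $1\in\bP_{k-i}$, i.e. $i\le k$). Using $\partial_x D_s^{-1}F_{i-1}=F_{i-1}/\bar h_j$ this yields $(F_i,v_x)_j=\bar h_j^{-1}(F_{i-1},v)_j$ for $i\ge2$, while the analogous computation with $D_s^{-1}L_{j,k}$ gives $(F_1,v_x)_j=-\bar h_j^{-1}(L_{j,k},v)_j$.

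Substituting these identities, $a_j(w_1,v)=\bar h_jG_2(F_1,v)_j+G_1(L_{j,k},v)_j$ and, for $i\ge2$, $a_j(w_i,v)=\bar h_j^iG_{i+1}(F_i,v)_j-\bar h_j^{i-1}G_i(F_{i-1},v)_j$. Adding these to $a_j(u-P_h^-u,v)=-G_1(L_{j,k},v)_j$, the $G_1(L_{j,k},v)_j$ pair cancels and, after reindexing, the remaining sum $\sum_{i=1}^l\bar h_j^iG_{i+1}(F_i,v)_j-\sum_{i=1}^{l-1}\bar h_j^iG_{i+1}(F_i,v)_j$ collapses to the single surviving term $a_j(u-u_I^l,v)=\bar h_j^lG_{l+1}(t)(F_l,v)_j$. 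The bound \eqref{corr2} then follows from H\"older's inequality, $|(F_l,v)_j|\le\|F_l\|_{0,\infty,\tau_j}\|v\|_{0,1,\tau_j}$, the uniform estimate $\|F_l\|_{0,\infty,\tau_j}\lesssim1$ from \eqref{fp1}, the quasi-uniformity $\bar h_j^l\sim h^l$, and the coefficient bound $|G_{l+1}|=|u_{j,k+1}^{(l+1)}|\lesssim h^{k+1}\|u\|_{k+l+2,\infty,\tau_j}$.

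The step I expect to be most delicate is the vanishing of the primitive at the downwind node $x_{j+\frac12}$, since this is exactly where the hypothesis $l\le k$ enters through \eqref{fp2}; keeping the index bookkeeping correct so that $F_{i-1}$ has zero mean for every $i$ in the sum is the crux of the telescoping. The remaining obstacle is purely technical: establishing $|u_{j,k+1}^{(i)}|\lesssim h^{k+1}\|u\|_{k+1+i,\infty,\tau_j}$ from \eqref{coefficient}, which needs standard Legendre-coefficient estimates together with the repeated use of $u_t=-u_x$ to trade each time derivative for a space derivative — this is precisely what forces the regularity assumption $u\in W^{k+l+2,\infty}(\Omega)$ when $i=l+1$. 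Everything else is bounded, mesh-independent algebra supplied by the preceding lemma.
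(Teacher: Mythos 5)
Your proof is correct, and it arrives at exactly the paper's key identity \eqref{correction}, namely $a_j(u-u_I^l,v)=\bar{h}_j^{l}G_{l+1}(F_l,v)_j$, by the same overall route: the splitting $u-u_I^l=(u-P_h^-u)+w^l$, cancellation of the boundary terms via $F_i(x_{j+\frac12}^-)=0$ and the prescription $w^l(x_{\frac12}^-,t)=0$, exactness of $P_h^-$ against $v_x\in\bP_{k-1}(\tau_j)$ from \eqref{gr}, a telescoping driven by the recursion \eqref{correcti}, and the same H\"older-plus-coefficient bound at the end. The one genuine difference is the \emph{direction} in which you run the telescoping, and it is worth recording because your version is tighter than the paper's at the top index. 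The paper raises the index, rewriting $(w_{it},v)_j=(w_{i+1},v_x)_j$ for $i=1,\dots,l$; the integration by parts there invokes $D_s^{-1}F_i(x_{j+\frac12}^-)=0$, i.e.\ $\int_{\tau_j}F_i\,dx=0$, which by \eqref{fp2} holds only for $i\le k-1$. At $i=l=k$ --- precisely the case needed for the $2k+1$ superconvergence --- one has $\int_{\tau_j}F_k\,dx=-b_{k,1}h_j\ne 0$ (indeed the proof of Theorem \ref{coro:1} relies on $\int_{\tau_j}w_k\,dx$ being merely $O(h^{2k+2})$, not zero), so that intermediate identity is, strictly speaking, unjustified there; it is harmless in the paper only because the final step of \eqref{correction} immediately undoes that last integration by parts, and the two defects cancel. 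Your bookkeeping lowers the index instead, $(F_i,v_x)_j=\bar{h}_j^{-1}(F_{i-1},v)_j$, which only ever needs the mean-zero property of $F_1,\dots,F_{l-1}$ (indices at most $k-1$, where \eqref{fp2} applies --- exactly the condition $1\in\bP_{k-i}$ you flag), while the surviving term $\bar{h}_j^{l}G_{l+1}(F_l,v)_j$ comes from the time-derivative term with no integration by parts at all; hence your argument is valid uniformly for $1\le l\le k$ and quietly repairs this small lacuna. The remaining ingredients --- $|G_{l+1}|=|u_{j,k+1}^{(l+1)}|\lesssim h^{k+1}\|u\|_{k+l+2,\infty,\tau_j}$ obtained from \eqref{coefficient} by trading each $\partial_t$ for $-\partial_x$ via the equation, together with $\|F_l\|_{0,\infty,\tau_j}\lesssim 1$ from \eqref{fp1} --- coincide with the paper's proof, including where the regularity $W^{k+l+2,\infty}$ enters.
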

\begin{proof}
By the definition of $a_j(\cdot,\cdot)$ and the fact that
$w_i(x^-_{j-\frac 12})=0$,
\[
  a_j(w_i,v)=(w_{it},v)_j-(w_i,v_x)_j.
 \]
By the definition of $w_i$, we have
\begin{eqnarray*}\label{eq55}
\begin{aligned}
 (w_{it},v)_j & =\bar{h}_j^{i}G_{i+1}(F_i,v)_{j}\\
    & =-\bar{h}_j^{i+1}G_{i+1}(D_s^{-1}F_i,v_x)_{j}\\
   &=\bar{h}_j^{i+1}G_{i+1}(F_{i+1},v_x)_{j}\\
   &=(w_{i+1},v_x)_{j},
 \end{aligned}
\end{eqnarray*}
  where in the second equality, we have used the integration by parts
and the fact that
$D_s^{-1}F_i(x_{j+\frac12}^-)=D_s^{-1}F_i(x_{j-\frac12}^+)=0$.
 Then \[a_j(w_i,v)=(w_{i+1}-w_i,v_x)_j\] and thus
\[
a_j(w^l,v)=\sum_{i=1}^la_j(w_i,v)=(w_{l+1}-w_1,v_x)_{j}.
\]
By \eqref{equa1}, $a_j(u-P_h^-u,v)=(w_1,v_x)_j$. Then
\begin{equation}\label{correction}
  a_j(u-u_I^l,v)=a_j(u-P_h^-u+w^l,v)=(w_{l+1},v_x)_{j}=\bar{h}_j^{l}G_{l+1}(F_l,v)_{j}.
\end{equation}
Consequently,
\begin{eqnarray*}
   |a_j(u-u^l_I,v)|&\lesssim&
   h_j^l\|v\|_{0,1,\tau_j}|G_{l+1}|\\
   &\lesssim&
   h_j^{k+l+1}\|u\|_{k+l+2,\infty,\tau_j}\|v\|_{0,1,\tau_j},
\end{eqnarray*}
 where in the last inequality, we have used the fact that
\[
   |G_{l+1}|=|D_t^{l+1}u_{j,k+1}|\lesssim
   h_j^{k+1}\|\partial_x^{k+1}\partial_t^{l+1}u\|_{0,\infty,\tau_j}.
\]
   The proof is completed.
\end{proof}


\begin{remark}
 As a direct consequence of \eqref{corr2},
\begin{equation}\label{corre3}
    \left|a(u-u^l_I,v)\right| \lesssim
   h^{k+l+1}\|u\|_{k+l+2,\infty}\|v\|_{0,1}.
\end{equation}
\end{remark}

\section{ Superconvergence}

   In this section, we shall study superconvergence properties of DG
   solution at some special points : downwind points and Radau
   points,  and for the domain
   average.

  We begin with a study of the difference between the interpolation function $u_I^l= P^-_hu-w^l$ and the DG solution
   $u_h$.
\begin{theorem}\label{theo:0}
    Let $u\in W^{k+l+2,\infty}(\Omega),u_h\in V_h$  be the solution of
   \eqref{con_laws} and  \eqref{DG_scheme1} respectively.
   Suppose $u_I^l\in V_h$ is  defined by \eqref{interpolation}, \eqref{correct},\eqref{correct1} and
  \eqref{correcti}.
   Then for both the Dirichlet  and  periodic boundary
  condition,
   \begin{equation}\label{spclossness}
  \|u_I^l-u_h\|_0(t)
   \lesssim \|u_I^l-u_h\|_0(0)+th^{k+l+1}\|u\|_{k+l+2,\infty}.
  \end{equation}
\end{theorem}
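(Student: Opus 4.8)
The error $e := u_I^l - u_h$ lives in $V_h$, so I would obtain an energy identity for $e$ by plugging $v = e$ into the bilinear form and exploiting the stability inequality \eqref{ineq1}. Write $u_I^l - u_h = (u_I^l - u) + (u - u_h)$. Since both $u$ and $u_h$ are Galerkin solutions, $a(u - u_h, v) = 0$ for all $v \in V_h$, hence $a(e, v) = a(u_I^l - u, v) = -\,a(u - u_I^l, v)$ for every $v \in V_h$. This is the key reduction: the consistency error of the scheme is transferred onto the interpolation residual $a(u - u_I^l, v)$, which Theorem~\ref{theorem1} (in the form \eqref{corre3}) already controls by $h^{k+l+1}$.

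**Main steps.** First I would verify that $e$ satisfies the boundary hypothesis needed for \eqref{ineq1}. In the periodic case both $u$, $u_h$ are periodic and $P_h^- u - w^l$ inherits periodicity, so $e^-_{1/2} = e^-_{N+1/2}$; in the Dirichlet case $u_h(0,t) = u(0,t) = g(t)$ and $w^l(x_{1/2}^-,t) = 0$ by construction, while $P_h^- u(x_{1/2}^-,t) = u(x_{1/2}^-,t)$, so $e^-_{1/2} = 0$. Either way \eqref{ineq1} applies, giving
\begin{equation*}
  \tfrac12 \frac{d}{dt}\|e\|_0^2 \le a(e,e) = -\,a(u - u_I^l, e).
\end{equation*}
Second, I would bound the right-hand side using \eqref{corre3} with $v = e$:
\begin{equation*}
  |a(u - u_I^l, e)| \lesssim h^{k+l+1} \|u\|_{k+l+2,\infty} \|e\|_{0,1}
  \lesssim h^{k+l+1} \|u\|_{k+l+2,\infty} \|e\|_0,
\end{equation*}
where the last step uses the Cauchy--Schwarz inequality $\|e\|_{0,1} \lesssim \|e\|_0$ on the bounded domain $\Omega = [0,2\pi]$. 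Combining gives
\begin{equation*}
  \tfrac12 \frac{d}{dt}\|e\|_0^2 \le C\, h^{k+l+1} \|u\|_{k+l+2,\infty}\, \|e\|_0.
\end{equation*}

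**Finishing.** Dividing by $\|e\|_0$ (on the set where it is positive) turns the left side into $\|e\|_0 \frac{d}{dt}\|e\|_0$, yielding $\frac{d}{dt}\|e\|_0 \le C\,h^{k+l+1}\|u\|_{k+l+2,\infty}$; integrating from $0$ to $t$ then gives \eqref{spclossness} directly. To handle the possibility that $\|e\|_0$ vanishes, I would instead work with $\phi(t) = \sqrt{\|e\|_0^2 + \delta}$ and let $\delta \to 0^+$, or integrate the differential inequality $\frac{d}{dt}\|e\|_0^2 \le 2C h^{k+l+1}\|u\|_{k+l+2,\infty}\|e\|_0$ via a standard Gronwall-type comparison argument.

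**Anticipated obstacle.** The genuinely delicate point is not the energy estimate itself, which is routine once \eqref{corre3} is in hand, but justifying the boundary condition on $e$ that licenses \eqref{ineq1} — in particular confirming that the correction function $w^l$ does not spoil the boundary matching. This is exactly why the construction in Section~3 forces $w^l(x_{1/2}^-,t) = 0$ and $w^l(x_{j+1/2}^-,t)=0$ at all downwind points via \eqref{corr1}; I would make sure to invoke that property explicitly. A secondary subtlety is the linear-in-$t$ growth: it arises precisely because the differential inequality has no damping term on $\|e\|_0$ (the jump terms in $a(e,e)$ are discarded as nonnegative), so the bound is additive rather than exponential, matching the stated form of \eqref{spclossness}.
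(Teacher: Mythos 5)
Your proof is correct and takes essentially the same route as the paper's: apply the stability inequality \eqref{ineq1} to $e=u_I^l-u_h$ after verifying the boundary terms in both the periodic and Dirichlet cases, use Galerkin orthogonality to replace $a(e,e)$ by the interpolation residual controlled by \eqref{corre3}, and integrate the resulting differential inequality $\frac{d}{dt}\|e\|_0\lesssim h^{k+l+1}\|u\|_{k+l+2,\infty}$ in time. Your explicit Cauchy--Schwarz step $\|e\|_{0,1}\lesssim\|e\|_0$ and the regularization handling a vanishing $\|e\|_0$ are minor refinements the paper leaves implicit.
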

\begin{proof}Since
\[
   (u_I^l-u_h)^-_{N+\frac 12}=(u_I^l-u_h)^-_{\frac 12}
\]
  for the periodic boundary
  condition and
\[
   (u_I^l-u_h)^-_{\frac 12}=0
\]
   for the Dirichlet boundary condition, \eqref{ineq1} is valid for both two cases if we choose  $v=u_I^l-u_h$.
   Noticing \eqref{corre3},
   we have
\begin{eqnarray*}
  \|u_I^l-u_h\|_0 \frac {d}{dt}\|u_I^l-u_h\|_0&\le&
   \left|a(u_h-u_I^l,u_I^l-u_h)\right|\\
   &=&\left|a(u-u_I^l,u_I^l-u_h)\right|\\
  &\lesssim& h^{k+l+1}\|u\|_{k+l+2,\infty}\|u^l_I-u_h\|_0.
\end{eqnarray*}
 Then
\begin{equation}\label{iii1}
 \frac{d}{dt}\|u_I^l-u_h\|_0
   \lesssim h^{k+l+1}\|u\|_{k+l+2,\infty}
\end{equation}
and \eqref{spclossness} follows.
\end{proof}

\begin{remark}
    From  Theorem \ref{theo:0}, we know that
the suitable choice of the initial solution is of great importance.
To guarantee the superconvergence  rate $k+l+1$ for
$\|u_I^l-u_h\|_0$, the initial error should reach the same
convergence rate,  that is
\begin{equation}\label{initial0}
 \|u_h(\cdot,0)-u_I^l(\cdot,0)\|_0\lesssim h^{k+l+1}\|u\|_{k+l+2,\infty}.
\end{equation}
 We shall demonstrate this point in our numerical
analysis. To obtain \eqref{initial0}, a natural way of initial
discretization is to choose
\begin{equation}\label{initial}
   u_h(x,0)=u_I^l(x,0), \forall x\in\Omega.
\end{equation}
\end{remark}

\subsection{Superconvergence  at the downwind points}

We are now ready to present our superconvergence results of DG
solution at the downwind points.
\begin{theorem}\label{theo:1}
    Let $u\in W^{2k+2,\infty}(\Omega)$  be the solution of
   \eqref{con_laws}, and $u_h$  the solution of \eqref{DG_scheme1}
    with initial value  $u_h(\cdot,0)$ be chosen such that \eqref{initial0} holds with
   $l=k$. Then for both the Dirichlet  and  periodic boundary
  condition,
\begin{equation}\label{super_node2}
   |(u-u_h)(x^-_{j+\frac 12},t)|\lesssim (1+t)h^{2k+\frac
   12}\|u\|_{2k+2,\infty},\ \ \forall j\in\bZ_N,
\end{equation}
  and
\begin{equation}\label{averagenode}
   \left(\frac 1N\sum_{j=1}^N\big(u-u_h\big)^2\big(x^-_{j+\frac 12},t\big)\right)^{\frac 12}\lesssim
   (1+t)h^{2k+1}\|u\|_{2k+2,\infty}.
\end{equation}
  Moreover, if we choose the initial value
  $u_h(\cdot,0)=u_I^k(\cdot,0)$, we have the following improved results
\begin{equation}\label{super_node3}
   |(u-u_h)(x^-_{j+\frac 12},t)|\lesssim th^{2k+\frac
   12}\|u\|_{2k+2,\infty},\ \ \forall j\in\bZ_N,
\end{equation}
and
\begin{equation}\label{averagenode1}
   \left(\frac 1N\sum_{j=1}^N\big(u-u_h\big)^2\big(x^-_{j+\frac 12},t\big)\right)^{\frac 12}\lesssim
   th^{2k+1}\|u\|_{2k+2,\infty}.
\end{equation}
\end{theorem}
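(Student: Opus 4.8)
The plan is to split the error as $u-u_h=(u-u_I^k)+(u_I^k-u_h)$ and exploit the fact that, at every downwind point, the special interpolant $u_I^k$ reproduces $u$ exactly. Indeed, combining the Gauss--Radau condition $P_h^-u(x_{j+\frac12}^-)=u(x_{j+\frac12}^-)$ from \eqref{gr} with the vanishing property $w^k(x_{j+\frac12}^-,t)=0$ from \eqref{corr1}, we obtain $(u-u_I^k)(x_{j+\frac12}^-,t)=0$ for every $j\in\bZ_N$. Hence at the downwind nodes the interpolation part drops out and
\[
  (u-u_h)(x_{j+\frac12}^-,t)=(u_I^k-u_h)(x_{j+\frac12}^-,t),
\]
so the whole theorem reduces to estimating the purely discrete quantity $\xi:=u_I^k-u_h\in V_h$ at the downwind points.

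To control $\xi$, I would apply Theorem \ref{theo:0} with $l=k$, which gives the supercloseness bound $\|\xi\|_0(t)\lesssim\|\xi\|_0(0)+t\,h^{2k+1}\|u\|_{2k+2,\infty}$. Under the initial-discretization assumption \eqref{initial0} with $l=k$, the starting error satisfies $\|\xi\|_0(0)\lesssim h^{2k+1}\|u\|_{2k+2,\infty}$, whence $\|\xi\|_0(t)\lesssim(1+t)h^{2k+1}\|u\|_{2k+2,\infty}$. For the improved estimates \eqref{super_node3}--\eqref{averagenode1} one simply takes $u_h(\cdot,0)=u_I^k(\cdot,0)$, so that $\xi(\cdot,0)\equiv0$ and the constant term disappears, replacing $(1+t)$ by $t$ throughout.

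It remains to pass from the global $L^2$-bound on $\xi$ to nodal information. Since $\xi|_{\tau_j}$ is a polynomial of degree $\le k$ on a cell of length $h_j\sim h$, the standard inverse (trace) inequality gives $|\xi(x_{j+\frac12}^-)|\lesssim h^{-1/2}\|\xi\|_{0,\tau_j}\le h^{-1/2}\|\xi\|_0$. Inserting the bound on $\|\xi\|_0$ yields the pointwise estimate
\[
  |(u-u_h)(x_{j+\frac12}^-,t)|\lesssim h^{-1/2}(1+t)h^{2k+1}\|u\|_{2k+2,\infty}=(1+t)h^{2k+\frac12}\|u\|_{2k+2,\infty},
\]
which is \eqref{super_node2}; the half-power loss is exactly the price of the inverse inequality. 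For the averaged estimate \eqref{averagenode} I would instead square the trace inequality and sum, obtaining $\sum_j\xi^2(x_{j+\frac12}^-)\lesssim h^{-1}\sum_j\|\xi\|_{0,\tau_j}^2=h^{-1}\|\xi\|_0^2$. Because the mesh is quasi-uniform on the fixed domain $[0,2\pi]$ we have $N\sim h^{-1}$, so the averaging factor $1/N\sim h$ cancels the $h^{-1}$, giving $\big(\frac1N\sum_j\xi^2(x_{j+\frac12}^-)\big)^{1/2}\lesssim\|\xi\|_0\lesssim(1+t)h^{2k+1}\|u\|_{2k+2,\infty}$. This recovers the full rate $2k+1$ in the average sense and proves \eqref{averagenode}.

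The essential work has already been done in Sections 3--4: the design of $w^k$ so that $u_I^k$ interpolates $u$ at the downwind points while keeping $a_j(u-u_I^k,v)$ of order $h^{2k+1}$, together with the energy argument behind Theorem \ref{theo:0}. Given those, the present theorem is mostly bookkeeping, and the only genuinely delicate point is the interplay between the inverse inequality and the averaging: individually each nodal value can only be controlled with a half-power loss, but the $1/N\sim h$ weight in the root-mean-square restores the clean $2k+1$ order. I expect no real obstacle beyond carefully tracking these powers of $h$ and invoking quasi-uniformity to identify $N\sim h^{-1}$.
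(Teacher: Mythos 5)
Your proposal is correct and follows essentially the same route as the paper's own proof: the identity $u_I^k(x_{j+\frac12}^-)=u(x_{j+\frac12}^-)$ from \eqref{gr} and \eqref{corr1}, the supercloseness bound from Theorem \ref{theo:0} together with \eqref{initial0} (or $\xi(\cdot,0)\equiv 0$ for the improved estimates), the cell-wise inverse inequality with its $h^{-1/2}$ loss for the pointwise bound, and the summation with $N\sim h^{-1}$ to recover the full rate $2k+1$ in the averaged sense. No gaps; the only cosmetic difference is that the paper writes the averaging step via $\sum_j h_j^{-1}\|\xi\|_{0,\tau_j}^2\lesssim N\|\xi\|_0^2$ rather than your equivalent $N\sim h^{-1}$ bookkeeping.
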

\begin{proof} If $u_h(\cdot,0)$ satisfies  \eqref{initial0} with $l=k$, by \eqref{iii1},
\begin{equation}\label{l2estimate}
   \|u_I-u_h\|_0(t)=\|u_I-u_h\|_0(0)+\int_0^t\frac
   {d}{dt}\|u_I-u_h\|_0(s) ds\lesssim (1+t)h^{2k+1}\|u\|_{2k+2,\infty},
\end{equation}
where $u_I=u_I^k$.
  For any fixed $t$, $u_I-u_h\in \bP_k$ in each $\tau_j,j\in \bZ_N$. Then the inverse inequality holds and thus,
\begin{eqnarray*}
   \left|(u_I-u_h)(x^-_{j+\frac
   12},t)\right|&\le &\|u_I-u_h\|_{0,\infty,\tau_j}(t)\\
    &\lesssim &h^{-\frac
  12}\|u_I-u_h\|_{0,\tau_j}(t)\\
  &\lesssim&(1+t)h^{2k+\frac
   12}\|u\|_{2k+2,\infty}.
\end{eqnarray*}
By the fact that $u(x_{j+\frac12}^-)=P_h^-u(x_{j+\frac12}^-)$ and
$w^k(x_{j+\frac12}^-)=0$, we have
\[
u_I(x_{j+\frac12}^-)=u(x_{j+\frac12}^-),\quad \forall j\in\bZ_N.
\]
Then the desired result \eqref{super_node2} follows.

We next show \eqref{averagenode}. Again by the inverse inequality,
\begin{eqnarray*}
\sum_{j=1}^N\|u_I-u_h\|^2_{0,\infty,\tau_j}\lesssim \sum_{j=1}^N
h_j^{-1}\|u_I-u_h\|^2_{0,\tau_j}
 \lesssim N\|u_I-u_h\|_0^2.
\end{eqnarray*}
 Then
\begin{eqnarray*}
    \frac 1N\sum_{j=1}^N\big(u-u_h\big)^2\big(x^-_{j+\frac
    12},t\big)&=&\frac 1N\sum_{j=1}^N\big(u_I-u_h\big)^2\big(x^-_{j+\frac
    12},t\big)\\
    &\le &\frac1N\sum_{j=1}^N\|u_I-u_h\|^2_{0,\infty,\tau_j}(t)\\
    &\lesssim& \|u_I-u_h\|_0^2(t).
\end{eqnarray*}
  The inequality \eqref{averagenode} follows directly from the estimate \eqref{l2estimate}.

  If the initial value $u_h(\cdot,0)=u_I(\cdot,0)$, then
\[
   \|u_I-u_h\|_0(t)=\int_0^t\frac
   {d}{dt}\|u_I-u_h\|_0(s) ds\lesssim
   th^{2k+1}\|u\|_{2k+2,\infty}.
\]
  Following the same line, we obtain \eqref{super_node3} and
  \eqref{averagenode1} directly.
\end{proof}

\begin{remark}
   By \eqref{l2estimate}, the interpolation function $u_I$ is superclose
   to the DG solution $u_h$, with the superconvergence rate $2k+1$.
\end{remark}

\subsection{ Superconvegence for the domain average}

  We have the following superconvergence results for the domain average of $u-u_h$.

\begin{theorem}\label{coro:1}
    Let $u\in W^{2k+2,\infty}(\Omega)$  be the solution of
   \eqref{con_laws}, and $u_h$  the solution of \eqref{DG_scheme1}.
   Suppose the initial solution $u_h(\cdot,0)=P_h^{-}u(\cdot,0)-w^k(\cdot,0)$ with $w^k$ defined
   by \eqref{correct}. Then
\begin{equation}\label{cell-average}
   \left| \frac{1}{2\pi}\int_{0}^{2\pi}(u-u_h)(x,t)dx\right|\lesssim
   (h^{1\over 2}+t^2)h^{2k+\frac 12}\|u\|_{2k+2,\infty}
\end{equation}
   for the Dirichlet boundary condition and
\begin{equation}\label{cell-average1}
   \left| \frac{1}{2\pi}\int_{0}^{2\pi}(u-u_h)(x,t)dx\right|\lesssim
     h^{2k+1}\|u\|_{2k+1,\infty}
\end{equation}
  for the periodic boundary condition.
\end{theorem}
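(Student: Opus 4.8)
The plan is to exploit the conservation structure of the scheme by testing the error equation against the constant function $v\equiv1$, which belongs to $V_h$ because $\bP_k\supset\bP_0$. Writing $e=u-u_h$ and using that both $u$ and $u_h$ satisfy $a(\cdot,v)=0$ for every $v\in V_h$, we have $a(e,1)=0$. Since $1_x=0$ and $1^{\pm}=1$, the definition of $a_j$ collapses to $a_j(e,1)=(e_t,1)_j+e^-_{j+\frac12}-e^-_{j-\frac12}$, and upon summing over $j\in\bZ_N$ the interior flux terms telescope, leaving the single identity
\[
\frac{d}{dt}\int_0^{2\pi} e(x,t)\,dx=e^-_{\frac12}(t)-e^-_{N+\frac12}(t).
\]
This identity drives both cases and decouples the domain average from the bulk $L^2$ supercloseness.

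For the periodic condition the right-hand side vanishes, since $e^-_{\frac12}=e^-_{N+\frac12}$, so the domain average of the error is conserved in time and equals its value at $t=0$. I would then evaluate $\int_0^{2\pi}e(x,0)\,dx$ through the initial discretization $u_h(\cdot,0)=P_h^-u(\cdot,0)-w^k(\cdot,0)$. Testing the Gauss--Radau definition \eqref{gr} with $v=1\in\bP_{k-1}$ (here $k\ge1$) shows that $P_h^-$ preserves cell averages, so $\int_0^{2\pi}(u-u_h)(x,0)\,dx=\int_0^{2\pi}w^k(x,0)\,dx$. By \eqref{fp2} one has $(F_i,1)_j=0$ for $i\le k-1$, so only the top term $w_k$ survives; a direct integration of the representation \eqref{fir} gives $\int_{\tau_j}F_k\,dx=-b_{k,1}h_j$, whence each cell contributes $\mathcal O(h_j^{2k+2})$ after inserting $|G_k|\lesssim h_j^{k+1}\|\partial_x^{k+1}\partial_t^k u\|_{0,\infty,\tau_j}$. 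Summing over the $\mathcal O(h^{-1})$ cells and converting $\partial_t^k$ into $(-1)^k\partial_x^k$ via the equation yields the stated $h^{2k+1}\|u\|_{2k+1,\infty}$ bound of \eqref{cell-average1}.

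For the Dirichlet condition the inflow value matches the boundary data, so $e^-_{\frac12}=u(0,t)-g(t)=0$, and the average error is now driven solely by the outflow (downwind) value:
\[
\frac{d}{dt}\int_0^{2\pi} e(x,t)\,dx=-e^-_{N+\frac12}(t)=-(u-u_h)(x^-_{N+\frac12},t).
\]
Integrating in time and invoking the downwind superconvergence estimate \eqref{super_node3} (valid precisely because here $u_h(\cdot,0)=u_I^k(\cdot,0)$), I would bound $|e^-_{N+\frac12}(s)|\lesssim s\,h^{2k+\frac12}\|u\|_{2k+2,\infty}$, so that $\int_0^t$ contributes $\mathcal O(t^2h^{2k+\frac12})$. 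Combining this with the initial average $\int_0^{2\pi}e(x,0)\,dx=\mathcal O(h^{2k+1})=\mathcal O(h^{\frac12}\,h^{2k+\frac12})$ from the previous paragraph gives the claimed $(h^{\frac12}+t^2)h^{2k+\frac12}\|u\|_{2k+2,\infty}$ bound of \eqref{cell-average}.

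The conceptual crux is the first step: recognizing that $v\equiv1$ is an admissible test function reduces the domain-average error to a quantity governed entirely by the two physical boundary fluxes. The main obstacle will be the Dirichlet case, where the average is not conserved and its growth is dictated by the pointwise downwind error at $x=2\pi$; controlling it requires the sharp $h^{2k+\frac12}$ downwind estimate of Theorem \ref{theo:1} rather than a crude $L^2$ bound, and it is this, together with the time integration, that forces the half power of $h$ and the quadratic-in-$t$ factor. The remaining cell-average computations are routine once the orthogonality \eqref{fp2} and the representation \eqref{fir} are in hand.
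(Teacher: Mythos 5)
Your proposal is correct and follows essentially the same route as the paper's own proof: testing the error equation with $v\equiv 1$ to reduce the domain average to the two boundary fluxes, evaluating the initial average via the cell-average preservation of $P_h^-$ and the top correction term $w_k$ (the orthogonality \eqref{fp2} killing $w_i$ for $i\le k-1$), and, in the Dirichlet case, bounding the outflow term by the downwind estimate \eqref{super_node3} and integrating in time to produce the $(h^{1/2}+t^2)h^{2k+1/2}$ factor. Your explicit identity $\int_{\tau_j}F_k\,dx=-b_{k,1}h_j$ is a marginally sharper form of the paper's bound $\bigl|\int_{\tau_j}F_k\,dx\bigr|\lesssim h_j$, but this does not change the argument.
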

\begin{proof}
    We first estimate the domain average of $u-u_h$ at time $t=0$.
   Note that
\[
   \int_{0}^{2\pi}(u-u_h)(x,0)dx=\int_{0}^{2\pi}(P_h^-u-u_h)(x,0)dx=\int_{0}^{2\pi}w^k(x,0)dx
\]
   By \eqref{fir}, \eqref{correct}-\eqref{correct1}, we derive
\[
    \int_{x_{j-\frac 12}}^{x_{j+\frac 12}} w^k(x,t) dx=\int_{x_{j-\frac 12}}^{x_{j+\frac 12}} w_k(x,t)
    dx=\bar{h}_j^kG_{k}\int_{x_{j-\frac 12}}^{x_{j+\frac 12}} F_k(x)
    dx,\ \ \forall j\in\bZ_N.
\]
   Here $G_{k}$ and $F_k$ are the same as in \eqref{correct}.  By the approximation theory, we have
\[
    |G_k|\lesssim
    h_j^{k+1}\|\partial_t^{k}\partial_x^{k+1}u\|_{0,\infty,\tau_j}\lesssim h_j^{k+1}\|u\|_{2k+1,\infty,\tau_j}.
\]
Then
\[
   \left|\int_{x_{j-\frac 12}}^{x_{j+\frac 12}} w^k(x,t)
   dx\right|\lesssim h^{2k+2}\|u\|_{2k+1,\infty,\tau_j},
\]
  which yields
\[
  \left|\int_{0}^{2\pi} w^k(x,t)
   dx\right|= \left|\sum_{j=1}^N \int_{x_{j-\frac 12}}^{x_{j+\frac 12}} w^k(x,t)
   dx\right|\lesssim h^{2k+1}\|u\|_{2k+1,\infty}.
\]
  Thus,
\[
   \left|\int_{0}^{2\pi}(u-u_h)(x,0)dx\right|\lesssim h^{2k+1}\|u\|_{2k+1,\infty}.
\]
   On the other hand, taking $v=1$ in \eqref{DG_scheme1} and summing
   up for all $j$, we obtain
\begin{eqnarray*}
    \int_{0}^{2\pi}(u-u_h)_t(x,t)dx &=&\sum_{j=1}^N (u-u_h)^-_{j+\frac
    12}(t)-(u-u_h)^-_{j-\frac 12}(t)\\
    &=&(u-u_h)^-_{N+\frac
    12}(t)-(u-u_h)^-_{\frac 12}(t).
\end{eqnarray*}
  Then we have, for the periodic boundary condition,
\[
   \frac{d}{dt}
   \int_{0}^{2\pi}(u-u_h)(x,t)dx=\int_{0}^{2\pi}(u-u_h)_t(x,t)dx=0,
\]
  and for the Dirichlet boundary condition,
\[
   \left|\frac{d}{dt}
   \int_{0}^{2\pi}(u-u_h)(x,t)dx\right|=\left|(u-u_h)^-_{N+\frac
    12}(t)\right|\lesssim th^{2k+\frac 12}\|u\|_{2k+2,\infty},
\]
  where in the last step, we have used
 \eqref{super_node3}.
   Note that
\[
     \int_{0}^{2\pi}(u-u_h)(x,t)dx= \int_{0}^{2\pi}(u-u_h)(x,0)dx+\int_0^t\frac{d}{dt}
   \int_{0}^{2\pi}(u-u_h)(x,t)dxdt.
\]
  Then the desired results follow.
\end{proof}

\subsection{Superconvergence  at the
 left Radau points}
  We denote by $R_{j,l}, l=0,\ldots,k$
  the left Radau points on the interval $\tau_j,j\in\bZ_N$,
  namely, the zeros of $L_{j,k+1}+L_{j,k}, j\in\bZ_N$. We shall prove that the derivative
  error of $u-u_h$ is superconvergent at all  left Radau
  points $R_{j,l},l\in\bZ_k$ except the point $R_{j,0}=x_{j-\frac 12}$.

\begin{lemma}
    Let $u\in W^{k+2,\infty}(\Omega)$  be the solution of
   \eqref{con_laws}. Then
\begin{equation}\label{radau_1}
   \left|\frac{\partial(u-P_h^-u)}{\partial x}(R_{j,l},t)\right|\lesssim h^{k+1}\|u\|_{k+2,\infty}, \ \
   \forall t>0, j\in\bZ_N, l\in\bZ_k.
\end{equation}
\end{lemma}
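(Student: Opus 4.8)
The plan is to work directly from the Radau expansion \eqref{expansion}. Subtracting the truncated expansion \eqref{eq3}, the projection error in each cell is $(u-P_h^-u)(x,t)=\sum_{m=k+1}^{\infty}u_{j,m}(t)(L_{j,m}-L_{j,m-1})(x)$. Passing to the reference variable $s=(x-x_j)/\bar h_j$, so that $L_{j,m}(x)=L_m(s)$ with $L_m$ the standard Legendre polynomial on $[-1,1]$ and $\partial_x=\bar h_j^{-1}\,d/ds$, I would split off the leading term $m=k+1$ and write $\partial_x(u-P_h^-u)=\bar h_j^{-1}u_{j,k+1}(t)\,(L_{k+1}'-L_k')(s)+\partial_x E_j$, where $E_j$ is the tail $\sum_{m\ge k+2}u_{j,m}(L_{j,m}-L_{j,m-1})$. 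Since $u_{j,k+1}\sim h^{k+1}$ while $\partial_x$ costs a factor $h^{-1}$, a naive bound on the leading term yields only ${\cal O}(h^{k})$; the entire point is that this leading term must \emph{vanish} at the interior left Radau points, buying back the missing power of $h$.

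The crux, and the step I expect to be the main obstacle, is the purely algebraic identity $(L_{k+1}'-L_k')(s_l)=0$ at every zero $s_l$, $l\in\bZ_k$, of $L_{k+1}+L_k$ lying in the open interval $(-1,1)$. I would establish it from the classical Legendre relation $(1-s^2)L_m'(s)=m\bigl(L_{m-1}(s)-sL_m(s)\bigr)$ together with Bonnet's recursion $(k+1)L_{k+1}(s)=(2k+1)sL_k(s)-kL_{k-1}(s)$. Taking the first relation for $m=k+1$ and $m=k$ and subtracting gives $(1-s^2)(L_{k+1}'-L_k')=(k+1)L_k-(k+1)sL_{k+1}-kL_{k-1}+ksL_k$. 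At a left Radau point one has $L_{k+1}(s_l)=-L_k(s_l)$; substituting this and then using Bonnet's recursion to rewrite $(2k+1)s_lL_k-kL_{k-1}=(k+1)L_{k+1}(s_l)=-(k+1)L_k(s_l)$ collapses the right-hand side to zero. Because $s_l\in(-1,1)$ exactly for $l\in\bZ_k$ (the excluded endpoint $s_0=-1$ is precisely where $1-s^2$ degenerates), I may divide by $1-s_l^2\neq0$ to conclude $(L_{k+1}'-L_k')(s_l)=0$, so the leading term contributes nothing at $R_{j,l}$.

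It then remains to control the tail derivative $\partial_x E_j$ at $R_{j,l}$. The plan is to recognize $E_j$ as the error of the degree-$(k+1)$ Gauss--Radau projection: by the same reasoning that produces \eqref{eq3}, truncating \eqref{expansion} at $m=k+1$ gives exactly the Gauss--Radau projection onto $P_{k+1}(\tau_j)$, call it $P_h^{-,(k+1)}u$, whence $E_j=u-P_h^{-,(k+1)}u$. This projection reproduces $P_{k+1}$ and is stable, so a scaling/Bramble--Hilbert argument delivers the derivative estimate $\|(u-P_h^{-,(k+1)}u)_x\|_{0,\infty,\tau_j}\lesssim h^{k+1}\|\partial_x^{k+2}u\|_{0,\infty,\tau_j}$. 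Evaluating at $R_{j,l}$ and bounding by the sup-norm gives $|\partial_x E_j(R_{j,l},t)|\lesssim h^{k+1}\|u\|_{k+2,\infty}$. Adding the vanishing leading term, I obtain $\bigl|\partial_x(u-P_h^-u)(R_{j,l},t)\bigr|\lesssim h^{k+1}\|u\|_{k+2,\infty}$ for all $l\in\bZ_k$, which is precisely \eqref{radau_1}.
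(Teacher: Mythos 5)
Your proposal is correct and follows essentially the same route as the paper: split off the $m=k+1$ term of the Radau expansion of $u-P_h^-u$, show its derivative vanishes at the interior left Radau points $s_l\in(-1,1)$, and bound the tail by approximation theory. The only differences are cosmetic: where the paper cites the identity $m(L_m+L_{m-1})(s)=(s+1)(L_m-L_{m-1})'(s)$ from \cite{zhang:SIAM2012}, you rederive the same vanishing from the classical relations $(1-s^2)L_m'=m(L_{m-1}-sL_m)$ and Bonnet's recursion (your algebra checks out), and you make explicit the tail bound --- by identifying the tail as the error of the degree-$(k+1)$ Gauss--Radau projection --- which the paper dispatches as ``standard approximation theory.''
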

\begin{proof}
   In each element $\tau_j, j\in\bZ_N$,  we have, from \eqref{expansion} and \eqref{eq3},
\[
    \frac{\partial(u-P_h^-u)}{\partial x}(x,t)=\sum_{m=k+1}^{\infty}u_{j,m}(t)(L_{j,m}-L_{j,m-1})'(x)
\]
   It is shown in \cite{zhang:SIAM2012} that
\[
    m(L_m+L_{m-1})(s)=(s+1)(L_m-L_{m-1})'(s),\ \ s\in [-1,1],
\]
  where $L_m$ is the Legendre polynomial of degree $m$ in the
  interval $[-1,1]$.
  Then by a scaling from $[-1,1]$ to $[x_{j-\frac 12},x_{j+\frac
  12}]$, we obtain
\[
    \frac{\partial(u-P_h^-u)}{\partial x}(x,t)=(k+1)u_{j,k+1}\frac{(L_{j,k+1}+L_{j,k})(x)}{x-x_{j-\frac
    12}}+\sum_{p=k+2}^{\infty}u_{j,p}(L_{j,p}-L_{j,p-1})'(x).
\]
  Noticing that the first term of the above equation vanishes at the
  interior left Radau points $R_{j,l}, l\in\bZ_k$, we have
\[
   \frac{\partial(u-P_h^-u)}{\partial x}(R_{j,l},t)=\sum_{p=k+2}^{\infty}u_{j,p}(L_{j,p}-L_{j,p-1})'(R_{j,l}),\
   \ l\in\bZ_k.
\]
  Then the desired result \eqref{radau_1} follows by the standard
  approximation theory.
\end{proof}

  We are ready to show the superconvergence results of $u_h$ at the
  interior left Radau points.
\begin{theorem}\label{theo:2}
  Let $u\in W^{k+4,\infty}(\Omega)$  be the solution of
   \eqref{con_laws}, and $u_h$  the solution of \eqref{DG_scheme1}
    with initial value  $u_h(\cdot,0)$ be chosen such that \eqref{initial0} holds with
   $l=2$.
   Then for both the Dirichlet and periodic boundary  condition,
\begin{equation}\label{radau_2}
   \left|\frac{\partial(u-u_h)}{\partial x}(R_{j,l},t)\right|\lesssim (1+t)h^{k+1}\|u\|_{k+4,\infty},\ \  (j,l)\in\bZ_N \times \bZ_k.
\end{equation}
\end{theorem}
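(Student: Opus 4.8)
The plan is to isolate the part of the derivative error that is controlled by the special interpolant $u_I^2$ from a genuine supercloseness remainder, exploiting that the hypothesis fixes $l=2$. At a fixed interior left Radau point $R_{j,l}$, $(j,l)\in\bZ_N\times\bZ_k$, I would split
\[
\frac{\partial(u-u_h)}{\partial x}(R_{j,l},t)=\frac{\partial(u-u_I^2)}{\partial x}(R_{j,l},t)+\frac{\partial(u_I^2-u_h)}{\partial x}(R_{j,l},t)
\]
and estimate the two pieces independently.

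For the first piece I would further decompose $u-u_I^2=(u-P_h^-u)+w^2$ with $w^2=w_1+w_2$. The term $\partial_x(u-P_h^-u)(R_{j,l},t)$ is already shown to be $O(h^{k+1})$ in the Lemma preceding this theorem (estimate \eqref{radau_1}). For $\partial_x w^2$, I would use the explicit representation \eqref{fir}, $F_i=\sum_m b_{i,m}(L_{j,m}-L_{j,m-1})$ with $|b_{i,m}|\lesssim1$, together with the scaling $L_{j,m}'(x)=\bar{h}_j^{-1}L_m'(s)$, $s=(x-x_j)/\bar{h}_j$. Since $w_i=\bar{h}_j^iG_iF_i$, differentiation consumes exactly one power of $\bar{h}_j$, so that $|\partial_x w_i(R_{j,l},t)|\lesssim\bar{h}_j^{i-1}|G_i|$, the factor $(L_m-L_{m-1})'$ being evaluated at the fixed scaled Radau node and hence $O(1)$. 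Using $|G_i|=|D_t^iu_{j,k+1}|\lesssim h^{k+1}\|u\|_{k+i+1,\infty}$ exactly as in the proof of Theorem \ref{theorem1}, the $w_1$ contribution is $O(h^{k+1})$ and the $w_2$ contribution is $O(h^{k+2})$. Hence the first piece is $\lesssim h^{k+1}\|u\|_{k+3,\infty}$, uniformly in $t$.

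For the second piece, $u_I^2-u_h$ is a polynomial of degree $k$ on each $\tau_j$ for fixed $t$, so the inverse inequality gives $|\partial_x(u_I^2-u_h)(R_{j,l},t)|\lesssim h^{-3/2}\|u_I^2-u_h\|_{0,\tau_j}(t)\lesssim h^{-3/2}\|u_I^2-u_h\|_0(t)$, where the exponent $-3/2$ is half a power for passing from $L^2$ to $L^\infty$ plus a full power for the derivative. Invoking Theorem \ref{theo:0} with $l=2$ together with the assumed initial bound \eqref{initial0} gives $\|u_I^2-u_h\|_0(t)\lesssim(1+t)h^{k+3}\|u\|_{k+4,\infty}$, so this piece is $\lesssim(1+t)h^{k+3/2}\|u\|_{k+4,\infty}$, of strictly higher order than $h^{k+1}$. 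Adding the two pieces and using quasi-uniformity to make all constants uniform in $j$ yields \eqref{radau_2}.

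The one delicate point---and the reason the hypothesis imposes $l=2$ rather than the cheaper $l=1$---is the balance in the second piece: the inverse inequality costs $h^{-3/2}$, so the supercloseness order $h^{k+l+1}$ must exceed $k+5/2$ to beat the target $h^{k+1}$. With $l=1$ one only gets $h^{k+1/2}$, which is insufficient, while $l=2$ is the smallest admissible choice and produces exactly the borderline gain $h^{k+3/2}$. Everything else is bookkeeping, provided one tracks the powers of $\bar{h}_j$ in $w^2$ carefully so that differentiation does not degrade the $O(h^{k+1})$ order of the interpolation part.
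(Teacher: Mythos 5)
Your proof is correct and follows essentially the same route as the paper: both split the error through the special interpolant, invoke the lemma estimate \eqref{radau_1} for $\partial_x(u-P_h^-u)$ at the interior left Radau points, use the $l=2$ supercloseness $\|u_I^2-u_h\|_0\lesssim(1+t)h^{k+3}\|u\|_{k+4,\infty}$ from Theorem \ref{theo:0} together with \eqref{initial0}, and close with inverse inequalities. The only difference is bookkeeping --- the paper bounds $\|w^2\|_{0,\infty,\tau_j}\lesssim h^{k+2}$, folds it into an $L^\infty$ bound for $P_h^-u-u_h$ (estimate \eqref{00}) and then applies the $h^{-1}$ derivative inverse inequality to that whole polynomial, whereas you differentiate $w^2$ directly via the representation \eqref{fir} and reserve the combined $h^{-3/2}$ inverse inequality for $u_I^2-u_h$ alone --- and your closing observation that $l\geq 3/2$, hence $l=2$, is forced by the $h^{-3/2}$ loss correctly explains the hypothesis, which the paper leaves implicit.
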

\begin{proof}
First, by Theorem \ref{theo:0} and the initial value chosen, we have
 \begin{equation*}\label{10}
   \|u_h-u_I^2\|_0 \lesssim (1+t)h^{k+3}\|u\|_{k+4,\infty}.
\end{equation*}
 Noticing $u_I^2=P_h^-u-w^2$ and
\begin{equation*}\label{11}
   \|w^2\|_{0,\infty,\tau_j}\lesssim h^{k+2}\|u\|_{k+3,\infty},
\end{equation*}
 we obtain
 \begin{equation}\label{00}
  \|u_h-P_h^-u\|_{0,\infty,\tau_j} \lesssim (1+t)h^{k+2}\|u\|_{k+4,\infty},\quad j\in\bZ_N.
\end{equation}
 Then by the inverse inequality,
\[
   |P_h^-u-u_h|_{1,\infty}\lesssim (1+t)h^{k+1}\|u\|_{k+4,\infty}.
\]
Consequently
\[
   \left|\frac{\partial(P_h^-u-u_h)}{\partial x}(R_{j,l},t)\right|\lesssim (1+t)h^{k+1}\|u\|_{k+4,\infty},\ \
   \forall j\in\bZ_N, l\in\bZ_k.
\]
Combining this  with the estimate \eqref{radau_1},  \eqref{radau_2}
follows.
\end{proof}

\subsection{Superconvergence at the right Radau
points}
  As we have mentioned in the introduction, one of the main theoretical results in \cite{Yang;Shu:SIAM2012}
    is the superconvergence rate $k+2$ for the function value error of $u-u_h$ at the right Radau points.
    A by-product of our analysis here is a different, and yet simpler way to establish this fact.

 Denote by $R^r_{j,l}, l\in \bZ_{k}$ the
   $k$ interior right Radau points in the interval $\tau_j, j\in\bZ_N$,
   namely, zeros of $L_{j,k+1}-L_{j,k}$ except the point $R_{j,0}^r=x_{j+\frac 12}$.  By the standard
   approximation theory
\[
    \left|(u-P_h^-u)(R_{j,l}^r,t)\right|=\left|\sum_{p=k+2}^{\infty}u_{j,p}(t)(L_{j,p}-L_{j,p-1})(R_{j,l}^r)\right|\lesssim
    h^{k+2}\|u\|_{k+2,\infty}.
\]
  On the other hand, if the initial value $u_h(\cdot,0)$ is chosen such that \eqref{initial0} holds with $l=2$, then
  \eqref{00} holds.
Consequently,
\begin{equation}\label{super_rrad}
    |(u-u_h)(R_{j,l}^r,t)|\lesssim (1+t)h^{k+2}\|u\|_{k+4,\infty},\ \
    \forall t\in (0,T].
\end{equation}

\begin{remark}  Similarly, if we choose the initial value
$u_h(\cdot,0)=u_I^l(\cdot,0)$ instead of letting $u_h(\cdot,0)$
satisfy \eqref{initial0} with $l=2$, the term $1+t$ in the estimates
\eqref{radau_2} and \eqref{super_rrad} can be improved to $t$.
 \end{remark}

To end  this section, we would like to demonstrate how to calculate
$u_I^l(x,0),1\le l\le k$
 only using the information of the initial value $u_0$.
Since $u_t+u_x=0$, we have for all integers $i\ge 1$,
\[
\frac{\partial^i}{\partial t^i}u(x,0)=(-1)^i u_0^{(i)}(x), \quad
\forall x\in\Omega.
\]
Therefore, by \eqref{coefficient}, for all $i\ge 1$, we have the
derivatives
\begin{equation}\label{derivm}
u^{(i)}_{j,k+1}(0)=(-1)^i\left(u_0^{(i)}(x^-_{j+\frac
12})-\frac{1}{h_j}\int_{\tau_j}
u_0^{(i)}(x)\sum_{m=0}^k(2m+1)L_{j,m}(x)dx\right).
\end{equation}
Now we divide the process into the following step :
\begin{itemize}
    \item[1.] In each element of $\tau_j$, calculate
    $G_i=u^{(i)}_{j,k+1}(0),i\in\bZ_l$ by \eqref{derivm}.
    \item[2.] Compute $F_i, i\in\bZ_l$ from \eqref{correcti} .
    \item[3.] Choose $w_i=\bar{h}_jF_iG_i$ and $w^l=\sum\limits_{i=1}^lw_i$.
    \item[4.] Figure out $u_I^l(x,0)=P^-_hu_0(x)-w^l(x,0)$.
\end{itemize}

\section{Numerical results}
   In this section, we present numerical examples to verify our
   theoretical findings.
   In our numerical experiments, we shall measure the
 maximum error and the average error at downwind points, the errors for the domain average and the cell average,
 the maximum derivative error at interior left Radau points and function value error at right Radau points, respectively.
  They are defined by

\begin{eqnarray*}
 &&e_{1}=\max_{j\in\bZ_N}\left|(u-u_h)(x^-_{j+\frac 12},T)\right|, \;\;
 e_{2}=\left(\frac 1N\sum_{j=1}^N\big(u-u_h\big)^2\big(x^-_{j+\frac 12},T\big)\right)^{\frac
 12},\\
&&e_{3}= \left|
\frac{1}{2\pi}\int_{0}^{2\pi}(u-u_h)(x,T)dx\right|,\;\;
 e_{4}=\max_{(j,l)\in\bZ_N\times\bZ_k}\left|\frac{\partial(u-u_h)}{\partial x}(R_{j,l},T)\right|\\
&&e_{5}=\max_{(j,l)\in\bZ_N\times\bZ_k}\left|(u-u_h)(R^r_{j,l},T)\right|,\;\;
 e_{6}=\left( \frac{1}{N}\sum_{j=1}^N\Big(\frac{1}{h_j}\int_{x_{j-\frac 12}}^{x_{j+\frac 12}}(u-u_h)(x,T)dx\Big)^2\right)^{\frac 12}.\\
\end{eqnarray*}


   To show
   the influence of the initial solution on the convergence rate, we
   also test four different methods for initial discretization in our
   experiments. There are

   Method 1: $u_h(x,0)=R_hu(x,0)$;

   Method 2: $u_h(x,0)=(P_h^-u)(x,0)$;

   Method 3: $u_{ht}(x,0)=(P_h^-u_t)(x,0),\; u_{h}(x^-_{j+\frac 12},0)=(P_h^-u)(x^-_{j+\frac 12},0) $;

   Method 4: $u_h(x,0)=u_I^k(x,0)$.\\
  Here $R_hu$ in Method 1 denotes the $L^2$ projection of $u$.
   Note that Method 4 is what we
   used in our superconvergence analysis, while Method 3 is a special way of initial
   discretization  proposed by Yang and Shu in \cite{Yang;Shu:SIAM2012}. In light of the frequent use of Methods
  1 and 2 for initial discretization of DG methods, we also test
   them in our experiments as comparison groups.

{\it Example} 1. We consider the following equation with the
 periodic boundary condition :
\begin{eqnarray*}
\begin{aligned}
   &u_t+u_x=0,\ \ &&(x,t)\in [0,2\pi]\times(0, {3\pi}/{4}], \\
   &u(x,0)=e^{\sin(x)},\ \  \\
   &u(0,t)=u(2\pi,t).
\end{aligned}
\end{eqnarray*}
   The exact solution to this problem is
\[
   u(x,t)=e^{\sin(x-t)}.
\]

  The problem is solved by the DG
  scheme \eqref{DG_scheme1} with $k=3,4$, respectively. Piecewise
  uniform meshes are used in our experiments, which are constructed by equally dividing
  each interval,  $[0,\frac{\pi}{2}]$ and $[\frac{\pi}{2},2\pi]$, into ${N}/{2}$ subintervals, $N=4,\ldots,512.$
  The ninth order strong-stability preserving (SSP) Runge-Kutta  method \cite{Gottlieb;Shu:SIAMSSP}
  with time step $\triangle t=0.05h_{min}, h_{min}={\pi}/{N}$ is used to reduce the time discretization error.

  Listed in Tables \ref{4}-\ref{7} are numerical data for errors
  $e_i,i=2,\ldots,6$ in cases $k=3,4$, with the initial solution obtained by Method 4.
  Depicted in Figure \ref{1_1} are corresponding
  error curves with log-log scale.

  We observe from Table \ref{7} and  Figure\ref{1_1} a convergence rate $k+1$ for $e_4$, $k+2$ for $e_5$, and $2k+1$ for
  $e_2$ and $e_3$, respectively. These results
   confirm our theoretical findings in Theorems \ref{theo:1}-\ref{theo:2},
   and \eqref{super_rrad} : The derivative error
  is superconvergent at all interior left Radau points and the
  function value error is superconvergent at all right Radau points,
 and the average error at downwind points is supercovergent as well
 as the error for the domain average, with a convergence rate $2k+1$.
 Moreover, we also observe numerically a $2k+1$ superconvergence
 rate for the cell average $e_6$.
Our numerical results demonstrate  that the superconvergence
 rates we proved in \eqref{averagenode}, \eqref{cell-average1}, \eqref{radau_2} and  \eqref{super_rrad} are optimal.

\begin{table}[htbp]\caption{ Errors $e_i, i=2,3,4$ in cases
$k=3,4$.}\label{4} \centering
\begin{threeparttable}
        \begin{tabular}{c c c c c c c c c c }
        \hline
        N & & $k=3$  & &  & & $k=4$ &  &   \\
     \cline{2-4} \cline{6-8}
       & $e_2$ & $e_3$&  $e_4$ && $e_2$  &$e_3$ & $e_4$   \\
        \hline\cline{1-9}
   4  &  2.45e-02 &  5.33e-03 &  1.10e-01 &&  5.48e-03 &  1.37e-03 &  5.53e-02 \\
   8  &  8.22e-04 &  4.64e-05 &  1.90e-02 &&  5.90e-05 &  2.15e-06 &  1.39e-03 \\
   16 &  1.04e-05 &  2.57e-07 &  1.08e-03 &&  2.12e-07 &  3.03e-09 &  9.37e-05 \\
   32 &  8.32e-08 &  1.83e-09 &  8.42e-05 &&  3.77e-10 &  4.96e-12 &  4.26e-06 \\
   64 &  6.73e-10 &  1.39e-11 &  5.34e-06 &&  7.61e-13 &  9.33e-15 &  1.47e-07 \\
  128 &  5.32e-12 &  1.08e-13 &  3.36e-07 &&  1.50e-15 &  1.81e-17 &  4.70e-09 \\
  256 &  4.17e-14 &  8.41e-16 &  2.10e-08 &&  2.94e-18 &  3.52e-20 &  1.47e-10 \\
  512 &  3.26e-16 &  6.57e-18 &  1.31e-09 &&  5.76e-21 &  6.87e-23 &  4.62e-12 \\
\hline
       \end{tabular}
 \end{threeparttable}
\end{table}
\begin{table}[htbp]\caption{Errors $e_5,e_6$ and corresponding
convergence rates in cases $k=3,4$.}\label{7} \centering
\begin{threeparttable}
        \begin{tabular}{c c c c c c c c c c c }
        \hline
        N & & $k=3$  & & & & & $k=4$ &  &   \\
     \cline{2-5} \cline{7-10}
       & $e_5$ & Rate&  $e_6$ & Rate && $e_5$  & Rate& $e_6$ & Rate  \\
        \hline\cline{1-10}
   4 &  5.39e-02  &   -- &  9.35e-03 &   --  & &1.34e-02 &    --   & 2.19e-03  &   --\\
   8 &  3.12e-03  & 4.11 &  1.60e-04 &  5.87 & & 2.15e-04 &   5.96 &  1.15e-05 &  7.57\\
   16&   8.99e-05&   5.12&   6.93e-06 &  4.53& &  4.31e-06 &  5.64 &  9.43e-08 &  6.93\\
   32&   2.68e-06 &  5.07 &  7.88e-08&   6.46& &  9.62e-08 &  5.49 &  3.48e-10 &  8.08\\
   64&   8.33e-08 &  5.01 &  6.66e-10 &  6.89 & & 1.65e-09 &  5.86 & 7.48e-13 &  8.86\\
   128 &  2.59e-09&   5.01&   5.32e-12&  6.97&  & 2.64e-11 &  5.97 &  1.50e-15 &  8.96\\
   256&   8.07e-11&   5.00 &  4.18e-14&  6.99 & & 4.14e-13 &  5.99 &  2.95e-18 &  9.00\\
   512 &  2.52e-12 &  5.00 &  3.27e-16&   7.00 &  &6.47e-15 & 6.00 &   5.77e-21 &  9.00\\
\hline
       \end{tabular}
 \end{threeparttable}
\end{table}

\begin{figure}[htbp]
\scalebox{0.5}{\includegraphics{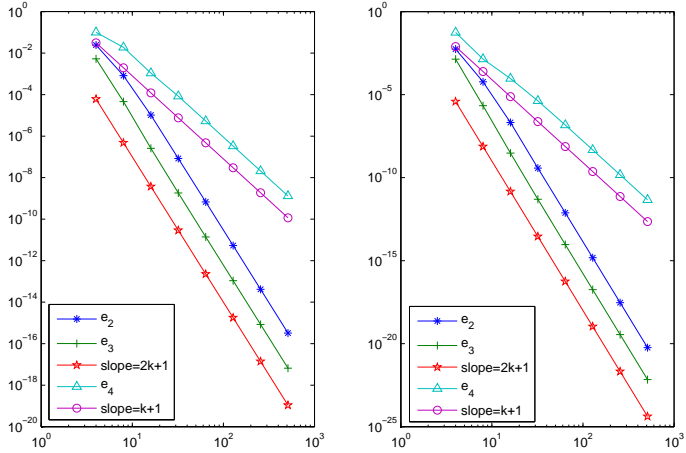}}
 \caption{left: $k=3$, right: $k=4$.}\label{1_1}
\end{figure}

We also test the superconvergence for the maximum error at downwind
points by using the four different methods mentioned above for
initial discretization. We list in Tables \ref{5}-\ref{6}, the
approximation error $e_1$ and the corresponding convergence rate in
cases  $k=3,4$, respectively.  It seems that different choices of
the initial solution lead to different convergence rates. We observe
that when using Method 4, the convergence rate is of order $2k+1$,
$1/2$ order higher than the one given in \eqref{super_node2}. On the
other hand, Methods 1-3 do not result in the superconvergence rate
$2k+1$. Therefore, the way of initial discretization has influence
on the superconvergence rate at the downwind points.

\begin{table}[htbp]\caption{ $e_1$ and  corresponding convergence rates for different  initial
discretizations in case $k=3$.}\label{5} \centering
\begin{threeparttable}
        \begin{tabular}{c c c| c c |c c |c c  }
        \hline
        N & Method 1 &   & Method 2&  &Method 3 &  & Method 4 &   \\
     \cline{2-5} \cline{6-9}
       & $e_1$ & Rate &  $e_1$ & Rate& $e_1$  & Rate & $e_1$ & Rate   \\
        \hline\cline{1-9}
   4  &  4.09e-02 &   --  &  4.33e-02 &   --  &  4.37e-02  &  --  &  4.51e-02  &   -- \\
   8  &  2.09e-03 &  4.29 &  2.11e-03 &  4.36 &  1.95e-03  & 4.49 &  2.20e-03  & 4.36 \\
   16 &  5.63e-05 &  5.21 &  3.71e-05 &  5.83 &  3.39e-05  & 5.85 &  3.32e-05  & 6.05 \\
   32 &  1.40e-06 &  5.33 &  3.75e-07 &  6.63 &  3.67e-07  & 6.53 &  3.10e-07  & 6.74 \\
   64 &  5.02e-08 &  4.80 &  3.57e-09 &  6.72 &  3.46e-09  & 6.73 &  2.53e-09  & 6.94 \\
  128 &  1.97e-09 &  4.67 &  6.01e-11 &  5.89 &  3.32e-11  & 6.71 &  2.00e-11  & 6.98 \\
  256 &  8.43e-11 &  4.55 &  1.06e-12 &  5.82 &  3.40e-13  & 6.61 &  1.57e-13  & 6.99 \\
  512 &  3.73e-12 &  4.50 &  2.49e-14 &  5.41 &  3.77e-15  & 6.49 &  1.23e-15  & 7.00 \\
\hline
       \end{tabular}
 \end{threeparttable}
\end{table}

\begin{table}[htbp]\caption{ $e_1$ and corresponding convergence rates for different  initial
discretizations in case $k=4$.}\label{6} \centering
\begin{threeparttable}
        \begin{tabular}{c c c| c c |c c |c c  }
        \hline
        N & Method 1 &   & Method 2&  &Method 3 &  & Method 4 &   \\
     \cline{2-5} \cline{6-9}
       & $e_1$ & Rate &  $e_1$ & Rate& $e_1$  & Rate & $e_1$ & Rate   \\
        \hline\cline{1-9}
   4  & 1.05e-02  &  --  & 1.06e-02 &    -- &  1.03e-02 &  --  &  1.09e-02 &  -- \\
   8  & 2.12e-04  & 5.63 & 1.61e-04 &  6.04 &  1.61e-04 & 6.00 &  1.60e-04 & 6.09 \\
   16 & 2.27e-06  & 6.54 & 8.23e-07 &  7.61 &  7.63e-07 & 7.72 &  6.19e-07 & 8.02 \\
   32 & 8.71e-08  & 4.71 & 5.25e-09 &  7.29 &  2.21e-09 & 8.43 &  1.44e-09 & 8.75 \\
   64 & 1.98e-09  & 5.46 & 8.45e-11 &  5.96 &  9.46e-12 & 7.87 &  2.94e-12 & 8.94 \\
  128 & 1.67e-11  & 6.89 & 1.04e-12 &  6.35 &  2.76e-14 & 8.42 &  5.82e-15 & 8.98 \\
  256 & 6.36e-13  & 4.71 & 1.17e-14 &  6.47 &  1.72e-16 & 7.32 &  1.14e-17 & 9.00 \\
  512 & 1.02e-14  & 5.96 & 1.25e-16 &  6.55 &  9.06e-19 & 7.57 &  2.23e-20 & 9.00 \\

\hline
       \end{tabular}
 \end{threeparttable}
\end{table}


 {\it Example} 2. We consider the following problem with the Dirichlet boundary
 condition :
\begin{eqnarray*}
\begin{aligned}
   &u_t+u_x=0,\ \ &&(x,t)\in [0,2\pi]\times(0,\pi], \\
   &u(x,0)=\sin(x), \\
   &u(0,t)=-\sin(t).
\end{aligned}
\end{eqnarray*}
     The exact solution to this problem is
\[
   u(x,t)=\sin(x-t).
\]

   We construct our meshes by dividing the interval
  $[0,2\pi]$ into $N$ subintervals, $N=2,\ldots, 64$, and solve this problem  by the DG
  scheme \eqref{DG_scheme1} with polynomial
  degree $k=3,4$, respectively. To  diminish the time discretization error, we use the fourth order
  Runge-Kutta method with time step $\triangle t=T/n$ for $n=10N^2$
  in $k=3$ and $n=5N^3$ in $k=4$.

Numerical data are demonstrated in Tables \ref{1}-\ref{8}, and
corresponding error curves are depicted in Figure \ref{1_2} on the
log-log scale with the initial solution obtained by Method 4. Again,
we observe a convergence rate $k+1$ for $e_4$, $k+2$ for $e_5$ and
$2k+1$ for $e_2,e_3$ and $e_6$, respectively. These results
 verify our theoretical findings in Theorems \ref{theo:1}-\ref{theo:2}
 and \eqref{super_rrad}.   Note that the superconvergence rate
 $2k+1$ for the domain average is $ 1/2$ order
 higher than the one given in \eqref{cell-average}.

\begin{table}[htbp]\caption{ Errors $e_i, i=2,3,4$ in cases
$k=3,4$.}\label{1} \centering
\begin{threeparttable}
        \begin{tabular}{c c c c c c c c c c }
        \hline
        N & & $k=3$  & &  & & $k=4$ &  &   \\
     \cline{2-4} \cline{6-8}
       & $e_2$ & $e_3$&  $e_4$ && $e_2$  &$e_3$ & $e_4$   \\
        \hline\cline{1-9}
   2  &  1.83e-03 &  8.64e-04 & 1.01e-02 && 5.00e-05 &  2.77e-05 &  7.08e-03 \\
   4  &  2.68e-05 &  8.04e-06 & 2.14e-03 && 2.11e-07 &  6.20e-08 & 1.85e-04  \\
   8  &  2.22e-07 &  6.56e-08 & 1.66e-04 && 4.29e-10 &  1.25e-10 &  7.24e-06 \\
   16 &  1.78e-09 &  5.14e-10 & 1.09e-05 && 8.56e-13 &  2.45e-13 &  2.38e-07 \\
   32 &  1.41e-11 &  4.01e-12 & 6.90e-07 && 1.68e-15 &  4.77e-16 &  7.55e-09 \\
   64 &  1.10e-13 &  3.12e-14 & 4.31e-08 && 3.29e-18 &  9.29e-19 &  2.36e-10 \\
\hline
       \end{tabular}
 \end{threeparttable}
\end{table}

\begin{table}[htbp]\caption{Errors $e_5,e_6$ and corresponding
convergence rates in cases $k=3,4$.}\label{8} \centering
\begin{threeparttable}
        \begin{tabular}{c c c c c c c c c c c }
        \hline
        N & & $k=3$  & & & & & $k=4$ &  &   \\
     \cline{2-5} \cline{7-10}
       & $e_5$ & Rate&  $e_6$ & Rate && $e_5$  & Rate& $e_6$ & Rate  \\
        \hline\cline{1-10}
  2 &  7.60e-03 &   --  & 2.41e-03    &    --  & & 1.75e-03 &        --  &  7.97e-05  &          --\\
   4&   3.96e-04 &   4.26  &  2.61e-05 &   6.53 & & 2.29e-05&    6.26&    2.05e-07&    8.60\\
   8 &   1.38e-05 &   4.85&    2.41e-07 &   6.76& &   4.36e-07 &   5.71 &   4.64e-10&    8.79\\
   16&    4.44e-07&    4.95&    1.98e-09&    6.93 & &  7.14e-09&    5.93&    9.43e-13&    8.94\\
   32 &   1.40e-08 &   4.99 &   1.57e-11&    6.98& &   1.13e-10 &   5.98 &   1.86e-15&    8.98\\
   64 &   4.39e-10&    5.00 &   1.23e-13&    6.99 & &  1.77e-12 &   6.00 &   3.66e-18 &   8.99\\
\hline
       \end{tabular}
 \end{threeparttable}
\end{table}

\begin{figure}[htbp]
\scalebox{0.5}{\includegraphics{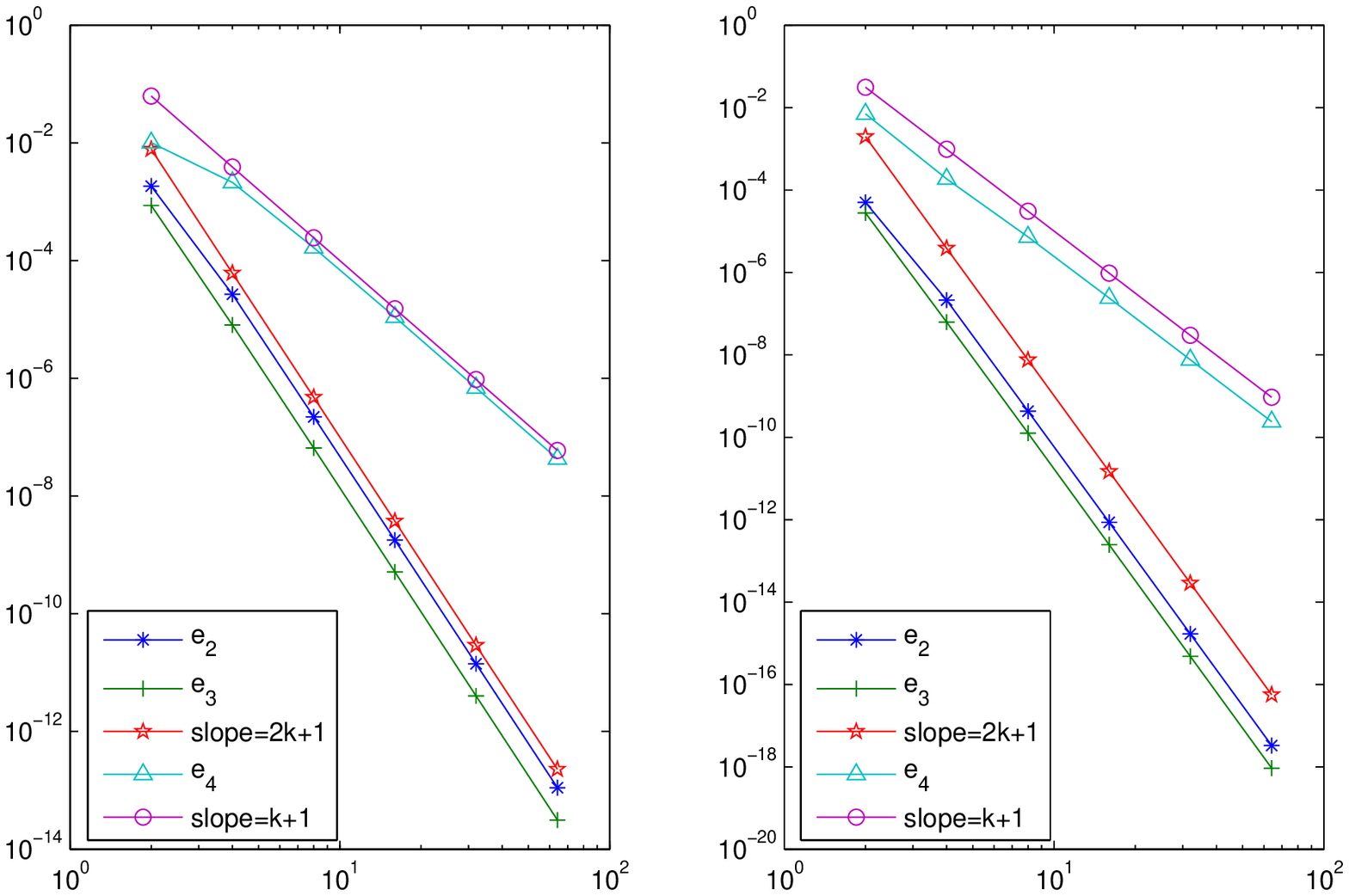}}
 \caption{left: $k=3$, right: $k=4$.}\label{1_2}
\end{figure}

As in Example 1, we also test  convergence rates at
 the downwind points under aforementioned four different initial discretization methods.
 Tables \ref{2}-\ref{3}  demonstrate corresponding errors and convergence rates, from which, we observe similar results as in the
 periodic boundary condition :  the convergence rate of $e_1$ is $2k+1$ for Method 4 while not for Methods 1-2.
 As for Method 3, it seems that the superconvergence rate is $2k+1$  for $k=3$. However, it is not valid  for $k=4$.

\begin{table}[htbp]\caption{ $e_1$ and corresponding convergence rates for different initial
discretizations in case $k=3$.}\label{2} \centering
\begin{threeparttable}
        \begin{tabular}{c c c| c c |c c |c c  }
        \hline
        N & Method 1 &   & Method 2&  &Method 3 &  & Method 4 &   \\
     \cline{2-5} \cline{6-9}
       & $e_1$ & Rate &  $e_1$ & Rate& $e_1$  & Rate & $e_1$ & Rate   \\
        \hline\cline{1-9}
   2   & 8.23e-03  &   --   & 4.63e-03  & --   & 4.88e-03   &   --  &  1.94e-03   &  -- \\
   4   & 2.88e-04  & 4.83   & 2.23e-05  & 7.70 & 5.93e-05   & 6.36  &  4.61e-05   & 5.39 \\
   8   & 1.26e-05  & 4.51   & 1.11e-06  & 4.33 & 5.21e-07   & 6.83  &  3.92e-07   & 6.88 \\
   16  & 1.81e-07  &  6.15  & 1.74e-08  & 6.00 & 3.54e-09   & 7.20  &  3.16e-09   & 6.96  \\
   32  & 6.10e-10  &  8.23  & 2.94e-10  & 5.89 & 2.96e-11   & 6.90  &  2.49e-11   & 6.99  \\
   64  & 1.39e-11  &  5.44  & 4.66e-12  & 5.98 & 2.32e-13   & 7.00  &  1.95e-13   & 7.00   \\
\hline
       \end{tabular}
 \end{threeparttable}
\end{table}

\begin{table}[htbp]\caption{ $e_1$ and corresponding convergence rates for different initial
discretizations in case $k=4$.}\label{3}
 \centering
\begin{threeparttable}
        \begin{tabular}{c c c| c c |c c |c c  }
        \hline
        N & Method 1 &   & Method 2&  &Method 3 &  & Method 4 &   \\
     \cline{2-5} \cline{6-9}
       & $e_1$ & Rate &  $e_1$ & Rate& $e_1$  & Rate & $e_1$ & Rate   \\
        \hline\cline{1-9}
  2 &  1.43e-04 &   --  &  8.26e-05  &  --  & 1.09e-04  &  --   & 5.25e-05 &  --\\
   4 &  2.69e-05 &  2.41 &  2.02e-06  & 5.36 & 1.00e-06  &  6.77 & 3.66e-07 &  7.16\\
   8 &  7.85e-07 &  5.10 &  1.25e-08  & 7.33 & 1.10e-08  &  6.51 & 7.60e-10 &  8.91\\
  16 &  2.02e-08 &  5.28 &  9.26e-11  & 7.08 & 7.71e-11  &  7.16 & 1.51e-12 &  8.97\\
  32 &  3.81e-10 &  5.73 &  5.13e-12  & 4.18 & 3.72e-13  &  7.70 & 2.96e-15 &  8.99\\
  64 &  4.78e-12 &  6.31 &  7.23e-14  & 6.15 & 1.52e-15  &  7.94 & 5.80e-18 &  9.00\\
\hline
       \end{tabular}
 \end{threeparttable}
\end{table}

\vskip.1in

\section {Conclusion}
 In this work, we have studied superconvergence
properties of the DG method for linear hyperbolic conservation laws
under the one spacial dimension setting. Our main theoretical result
is the proof of $2k+1$-superconvergence rate at the downwind points
in an average sense (Theorem \ref{theo:1}, equations
\eqref{averagenode}   and \eqref{averagenode1}) as well as for the
domain average (Theorem \ref{coro:1}, equation
\eqref{cell-average1}), and thereby settle a long standing
theoretical conjecture. An unexpected discovery is that in order to
achieve the $2k+1$ rate, a proper implementation of the initial
solution based on the correction procedure introduced in this paper
is crucial for $k>3$. This observation is supported by a numerical
comparison with traditional implementations of the initial solution.
Indeed, only Method 4 (based on our correction scheme) can achieve
$2k+1$ rate for $k=4$.

As a by-product, we also proved, for the first time, a point-wise
derivative superconvergence rate $k+1$ at all left Radau points
(Theorem \ref{theo:2}, equation \eqref{radau_2}). At this point, our
proof for the point-wise superconvergence rate $2k+1/2$ at the
downwind points (Theorem \ref{theo:1}, equations \eqref{super_node2}
and \eqref{super_node3}) is still sub-optimal (comparing with the
numerical rate $2k+1$). In addition, the proof of $2k+1$ rate for
the cell average remains open. Our other on-going works include
convection-diffusion equations as discussed in
\cite{Chen;Shu:SIAM2010} and higher dimensional conservation laws.


\end{document}